\DeclareMathOperator{\Spec}{Spec}
\DeclareMathOperator{\Fr}{Fr}
\DeclareMathOperator{\Gal}{Gal}
\DeclareMathOperator{\id}{id}
\DeclareMathOperator{\tr}{tr}
\DeclareMathOperator{\proj}{proj}
\newtheorem{thm}{Theorem}[section]
\newtheorem{lem}[thm]{Lemma}
\newtheorem*{thmn}{Theorem}
\newcommand{\F}{\mathbb{F}}
\begin{document}

\title
[On the connected components of moduli spaces]
{On the connected components of moduli spaces 
of finite flat models}
\author{Naoki Imai}
\address{Graduate School of Mathematical Sciences, 
the University of Tokyo, 3-8-1 Komaba, Meguro-ku, Tokyo 153-8914, Japan.}
\email{naoki@ms.u-tokyo.ac.jp}

\maketitle

\begin{abstract}
We prove that the non-ordinary component is connected 
in the moduli spaces of finite flat models 
of two-dimensional local Galois representations over finite fields. 
This was conjectured by Kisin.
As an application to global Galois representations, 
we prove a theorem on the modularity 
comparing a deformation ring and a Hecke ring.
\end{abstract}

\section*{Introduction}
Let $K$ be a $p$-adic field for $p>2$, and let $V_{\F}$ be 
a two-dimensional continuous 
representation of the absolute Galois group $G_K$ 
over a finite field $\F$ of characteristic $p$. 
The projective
scheme $\mathscr{GR}^{\mathbf{v}}_{V_{\F},0}$ over $\F$
is the moduli of finite flat models of $V_{\F}$ with some
determinant condition. From the viewpoint 
of the application to the modularity problem, we are interested
in the connected components of
$\mathscr{GR}^{\mathbf{v}}_{V_{\F},0}$. The ordinary component
of $\mathscr{GR}^{\mathbf{v}}_{V_{\F},0}$ was determined in
\cite{Kis}, and Kisin conjectured that 
the non-ordinary component is connected. 
We prove this conjecture, and the main theorem is the following.

\begin{thmn}
Let $\F'$ be a finite extension of $\F$. 
Suppose $x_1 ,x_2 \in\mathscr{GR}^{\mathbf{v}}_{V_{\F},0}(\F')$ 
correspond to objects 
$\mathfrak{M}_{1,\F'} , \mathfrak{M}_{2,\F'}$ of 
$({\mathrm{Mod}}/\mathfrak{S})_{\F'}$ respectively.
If $\mathfrak{M}_{1,\F'}$ and $\mathfrak{M}_{2,\F'}$
 are both non-ordinary, then $x_1$ and $x_2$ lie on the same connected
 component of
 $\mathscr{GR}^{\mathbf{v}}_{V_{\F},0}$.
\end{thmn}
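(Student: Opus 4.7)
The plan is to show every non-ordinary $\F'$-point can be connected to a single fixed reference point by a chain of $\mathbb{A}^1$-families inside $\mathscr{GR}^{\mathbf{v}}_{V_{\F},0}$. I work throughout in the Kisin-module description: rank-$2$ objects $\mathfrak{M}$ of $({\mathrm{Mod}}/\mathfrak{S})_{\F'}$ equipped with an identification of the associated Galois representation with $V_{\F}$.

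First I would reduce the Frobenius to normal form. After choosing an $\mathfrak{S}_{\F'}$-basis, the Frobenius of $\mathfrak{M}_{i,\F'}$ is encoded by a $2\times 2$ matrix $A_i$ over $\F'[[u]]$ whose determinant is a unit multiple of the power of $E(u)$ prescribed by $\mathbf{v}$. Case-splitting according to the isomorphism type of $V_{\F}|_{I_K}$ and using the $\mathrm{GL}_2(\F'[[u]])$-action by change of basis, one can bring each $A_i$ into a finite list of normal forms. Each normal form has an irreducible affine parameter space; the ordinary shapes are already understood from Kisin's earlier work, so only the non-ordinary shapes need to be linked.

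Next I would construct explicit one-parameter families $\mathfrak{M}_t$ over $\F'[t][[u]]$ whose specializations at two distinct values of $t$ realize two different non-ordinary normal forms. The essential requirement is that the Galois representation $T_{\F}(\mathfrak{M}_t)$ be the constant family with value $V_{\F}$, which can be verified by computing the Galois action on the \'etale realization $(\mathfrak{M}_t \otimes_{\mathfrak{S}} \mathfrak{S}^{\mathrm{ur}}/p)^{\varphi=1}$ and observing its $t$-independence. Concatenating such families produces a chain from $x_1$ through a fixed reference non-ordinary point to $x_2$.

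The main obstacle will be verifying that the interpolating families $\mathfrak{M}_t$ never degenerate to an ordinary Kisin module, since otherwise the chain could escape into the ordinary component and yield no information about connectedness of the non-ordinary locus. This reduces to the matrix-theoretic check that the Frobenius $A_t$ admits no eigenline modulo $u$ with the ordinary slope for any specialization of $t$. A further combinatorial difficulty is the case analysis itself; I anticipate the hardest case to be when $V_{\F}|_{I_K}$ is irreducible, since there the stabilizer of the rigidification by a fixed basis is smaller and the enumeration of normal forms is correspondingly more delicate.
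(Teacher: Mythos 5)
Your high-level strategy---connect arbitrary non-ordinary points by chains of one-parameter families landing inside $\mathscr{GR}^{\mathbf{v}}_{V_{\F},0}$---is the right starting point, and it does match the paper's strategy at the broadest level. But as written the proposal is essentially a restatement of the problem; the two ideas that actually make the argument work are missing, and one of your stated ``obstacles'' is a misconception.

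First, the mechanism producing the families. You would need a criterion guaranteeing that a candidate family $\mathfrak{M}_t = (1+tN)\cdot\mathfrak{M}_{1}$ is $\phi$-stable for \emph{all} $t$ simultaneously, not just at $t=0,1$, and that it assembles into a morphism from a connected scheme. The paper relies entirely on Gee's lemma (Lemma~\ref{connection} here): writing $\mathfrak{M}_1 \sim (A_i)_i$ in the idempotent decomposition $\mathfrak{S}\otimes_{\mathbb{Z}_p}\F \cong \prod_{\sigma}\F[[u]]$, and $\mathfrak{M}_2=(1+N)\cdot\mathfrak{M}_1$ with $N$ nilpotent, the integrality of $\phi(N_i)A_iN_{i+1}$ for every $i$ is exactly what is needed to get a $\mathbb{P}^1$-family. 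Nothing in your proposal identifies or substitutes for this condition; ``observe its $t$-independence'' is not a proof that the lattice is $\phi$-stable over $\F'[t]$.

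Second, and more seriously, the actual difficulty of the theorem is not the enumeration of normal forms---for general $e$ there is no manageable finite list---but the combinatorial reduction that brings an arbitrary pair of non-ordinary lattices into a position where the integrality criterion applies. The bulk of the paper's proof consists of: operations $U^{(i)}$ that shift $u$-powers between the $n$ idempotent components (Lemma~\ref{operation}), tracked by sequences of integers $(s_i,t_i,a_i,r_i)$; explicit bounds such as $|s_i-t_i|\le p+1$ and $|a_i|\le1$ coming from telescoping $p$-adic inequalities; and in each branch of the case analysis, showing that failure to reduce further forces $\mathfrak{M}_{1,\F}$ or $\mathfrak{M}_{2,\F}$ to be ordinary, a contradiction. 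This is where the assumption $n\ge2$ (residue field bigger than $\F_p$) genuinely changes the structure of the problem, because $\phi$ cyclically permutes the $n$ factors of the decomposition, and your proposal does not engage with that structure at all.

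Finally, the worry that ``the chain could escape into the ordinary component'' is not the issue. The conclusion is that $x_1$ and $x_2$ lie on the same connected component of $\mathscr{GR}^{\mathbf{v}}_{V_{\F},0}$; any connected family through them, ordinary or not, would already prove that. Where non-ordinarity actually enters is as a \emph{hypothesis} used to rule out degenerate configurations during the reduction (e.g.\ to show one cannot have $v_u(c_j)=0$ for all $j$, or $v_u(a_i)-pa_i+a_{i+1}=0$ for all $i$), not as a property that the connecting family must maintain.
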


When $K$ is totally ramified over $\mathbb{Q}_p$, this was proved in
\cite{Kis}.
If the residue field of $K$ is bigger than $\F_p$, 
the situation changes greatly 
because $\mathfrak{S}\otimes _{\mathbb{Z}_p}{\F}$ can
be split into a direct product.
When $K$ is a general $p$-adic field, 
the case of $V_{\mathbb{F}}$ being the trivial
representation was treated in \cite{Gee}. 

As an application to global Galois representations, 
we prove a theorem on the modularity, 
which states that 
a deformation ring is isomorphic to a Hecke ring 
up to $p$-power torsion kernel. 
This completes Kisin's theory for $GL _2 $.

\subsection*{Acknowledgment}
The author is grateful to his advisor Takeshi Saito 
for his careful reading of an earlier version of this paper 
and for his helpful comments. 
He would like to thank the referee 
for a careful reading of this paper 
and a number of suggestions for improvements. 

\subsection*{Notation}
Throughout this paper, we use the following notation.
Let $p>2$ be a prime number, and $k$ be a finite extension of
$\mathbb{F}_p$ of cardinality $q=p^n$. 
The Witt ring of $k$ is denoted by $W(k)$, and let $K_0 =W(k)[1/p]$. 
Let $K$ be
a totally ramified extension of $K_0$ of degree $e$, and $\mathcal{O}_K$
be the ring of integers of $K$. Let $I_K$ be the inertia group of
the absolute Galois group $G_K$, and $\Fr _q$ be the $q$-th power Frobenius
of the absolute Galois group $G_k$. 
Let $\F$ be a finite field of characteristic $p$.
The formal power series ring of $u$ over $\F$ is
denoted by $\F [[u]]$, 
and its quotient field is denoted by $\F ((u))$. 
Let $v_u$ be the valuation of 
$\F ((u))$ normalized by $v_u (u)=1$.

\section{Preliminaries}
First of all, we recall some notation from \cite{Kis}, 
and the interested reader should consult \cite{Kis} 
for more detailed definitions.

For each $\mathbb{Q}_p$-algebra embedding $\psi : K \to \overline{K}_0$, 
we put $v_{\psi} =1$ and set $\mathbf{v} = (v_{\psi} )_{\psi}$.
Let $F$ be a finite Galois extension of $\mathbb{Q}_p$
containing $K_0$, and $\F$ be the residue field of $F$.
Let $V_{\F}$ be a continuous 
two-dimensional representation of $G_K$ over $\F$. We assume that $V_{\F}$
arises as the generic fiber of a finite flat group scheme over $\mathcal{O}_K$.

Let $R$ be a complete local Noetherian $\mathcal{O}_F$-algebra with
residue field $\F$, and fix $\xi \in D^{\mathrm{fl}}_{V_{\F}}(R)$. We make
the following assumption:
\begin{center}
The morphism $\xi \to D^{\mathrm{fl}}_{V_{\F}}$ of groupoids over
 $\mathfrak{AR}_{\mathcal{O}_F}$ is formally smooth.
\end{center}
Now we can construct 
$\varTheta^{\mathbf{v}}_{V_{\F},\xi} : 
 \mathscr{GR}^{\mathbf{v},{\mathrm{loc}}}_{V_{\F},\xi} \to 
 \Spec R^{\mathbf{v}}$. 
Let 
$\mathscr{GR}^{\mathbf{v}}_{V_{\F},0}$ 
be its fiber over the closed point of $\Spec R^{\mathbf{v}}$.
We assume $\Spec R^{\mathbf{v}} \neq \emptyset$, and this assumption
assures that the action of $I_K$ on $\det V_{\F}$ is
the reduction mod $p$ of the cyclotomic character.

The fundamental character of level $m$ is given by
\[
 \omega _m : I_K \to \overline{k}^{\times} ;\ 
 g\mapsto \frac{g(\sqrt[p^m -1]{\pi})}{\sqrt[p^m -1]{\pi}} 
 \ \mathrm{mod} \ m_{\mathcal{O}_{\overline{K}}}.
\]
Here $\pi$ is a uniformizer of $\mathcal{O}_K$, 
and $m_{\mathcal{O}_{\overline{K}}}$ is the maximal ideal of 
$\mathcal{O}_{\overline{K}}$. 
If $K'/K$ is a finite unramified extension that contains the
$(p^m -1)$-st roots of unity, then the same formula
as above defines a character of $G_{K'}$, which is again 
denoted by $\omega _m$. Note that this extension depends on the 
choice of the uniformizer $\pi$. From now on, we fix 
a uniformizer $\pi$.

\begin{lem}\label{absirr}
If $V_{\F}$ is absolutely irreducible and $\F_{q^2} \subset \F$, then
\[
 V_{\F}|_{I_K} \sim \omega _{2n} ^{s} \oplus \omega _{2n} ^{qs}
\]
for a positive integer $s$ such that $(q+1) \nmid s$.
\end{lem}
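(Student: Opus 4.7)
The plan is to diagonalize $V_{\F}|_{I_K}$ as a sum of two tame characters and then determine how Frobenius permutes them.

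First I would show that $I_K$ acts through its tame quotient. The wild inertia $P_K$ is pro-$p$, so its continuous action on the characteristic-$p$ space $V_{\F}$ has a nonzero fixed subspace $V_{\F}^{P_K}$. Since $P_K$ is normal in $G_K$, this subspace is $G_K$-stable, and absolute irreducibility forces $V_{\F}^{P_K} = V_{\F}$. The action of $I_K$ then factors through the pro-prime-to-$p$ quotient $I_K^{\mathrm{tame}}$, so Maschke's theorem gives
\[
 V_{\F}|_{I_K} = \chi_1 \oplus \chi_2
\]
for two tame characters $\chi_1, \chi_2 : I_K \to \F^{\times}$.

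Next I fix a lift $\phi \in G_K$ of $\Fr_q$. Since conjugation by $\phi$ acts on $I_K^{\mathrm{tame}}$ by the $q$-th power map, $\phi$ sends the $\chi_i$-eigenline to the $\chi_i^q$-eigenline. If $\chi_1 = \chi_2$, then $I_K$ acts as a single scalar character on $V_{\F} \otimes \overline{\F}$, so $\phi$ commutes with the $I_K$-action and possesses an eigenvector over $\overline{\F}$, producing a $G_K$-stable line and contradicting absolute irreducibility. Hence $\chi_1 \neq \chi_2$; and $\phi$ cannot preserve either eigenline, since such a line would again be $G_K$-stable. Therefore $\phi$ must swap the eigenlines, yielding $\chi_2 = \chi_1^q$ and $\chi_1 = \chi_2^q$. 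In particular each $\chi_i$ satisfies $\chi_i^{q^2 - 1} = 1$, so its image lies in $\mu_{q^2 - 1} \subset \F_{q^2}^{\times} \subset \F^{\times}$.

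Since every tame character of $I_K$ with values in $\mu_{q^2-1}$ is a power of the level-$2n$ fundamental character, I may write $\chi_1 = \omega_{2n}^{s}$ for a positive integer $s$, and then $\chi_2 = \chi_1^{q} = \omega_{2n}^{qs}$. The already-established inequality $\chi_1 \neq \chi_2$ becomes $\omega_{2n}^{(q-1)s} \neq 1$, i.e.\ $(q^2 - 1) \nmid (q-1)s$; since $\gcd(q^2 - 1, q - 1) = q - 1$, this simplifies to $(q + 1) \nmid s$. The main obstacle is the exclusion of the case $\chi_1 = \chi_2$: this is where the full strength of absolute (rather than merely $\F$-rational) irreducibility is used, and it forces the argument to pass to $\overline{\F}$ to produce the $\phi$-eigenvector that yields a $G_K$-stable line.
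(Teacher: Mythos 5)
Your proof is correct and follows the same route as the paper's: kill the wild inertia action via absolute irreducibility, decompose $V_{\F}|_{I_K}$ into two tame characters, use the fact that a Frobenius lift conjugates tame characters to their $q$-th power to rule out $\chi_1 = \chi_2$ and force a swap of the eigenlines, and read off the level-$2n$ form $\omega_{2n}^{s}\oplus\omega_{2n}^{qs}$. The only cosmetic difference is at the end: the paper derives $(q+1)\nmid s$ as a separate contradiction (twisting by $\omega_n^{-s/(q+1)}$ would make $V_{\F}$ factor through the abelian group $G_k$), whereas you obtain it directly as a restatement of the already-established inequality $\chi_1\neq\chi_2$; the two are equivalent.
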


\begin{proof}
Let $I_P \subset I_K$ be the wild inertia group. 
Then $V^{I_P} _{\F} \neq 0$ and $V^{I_P} _{\F}$ is 
$G_K$-stable, so $V^{I_P} _{\F} = V_{\F}$.
As the action of $I_K$ on $V_{\F}$ factors through 
the tame inertia group, we get
$V_{\F}|_{I_K} \sim \omega _{m_1} ^{s_1}  \oplus \omega _{m_2} ^{s_2}$
for some non-negative integers $s_1$, $s_2$ 
and some positive integers $m_1$, $m_2$.
Now we fix a lifting 
$\widetilde{\Fr} _q \in G_K$ of the $q$-th Frobenius $\Fr _q$. 
For every $\sigma \in I_K$ and every positive integer $m$, we have 
$\omega _m \bigl( \widetilde{\Fr} _q \circ \sigma \circ
 (\widetilde{\Fr} _q) ^{-1} \bigr) =\omega _m (\sigma)^q$. 
Changing the above basis by the action of $(\widetilde{\Fr} _q )^{-1}$, 
we obtain 
$V_{\F}|_{I_K} \sim \omega _{m_1} ^{qs_1} \oplus \omega _{m_2} ^{qs_2}$.

If $\omega _{m_1} ^{s_1} = \omega _{m_2} ^{s_2} $, we get
$\omega _{m_1} ^{s_1} = \omega _{m_1} ^{qs_1} $. So 
we may assume $m_1=n$. As $\omega_n$ is defined over $G_K$, 
we can consider the representation $V_{\F} \otimes \omega _n ^{-s_1}$ 
of $G_K$. Then this representation is absolutely irreducible 
and factors through $G_k$. This is a contradiction.

So we may assume $\omega _{m_1} ^{s_1} \neq \omega _{m_2} ^{s_2}$.
As $V_{\F}$ is an irreducible representation, 
$\omega _{m_1} ^{s_1} = \omega _{m_2} ^{qs_2} $ and 
$\omega _{m_2} ^{s_2} = \omega _{m_1} ^{qs_1} $.
Hence $\omega _{m_1} ^{s_1} = \omega _{m_1} ^{q^2 s_1}$ 
and we may assume $m_1=2n$. Thus we get 
$V_{\F}|_{I_K} \sim \omega _{2n} ^{s} \oplus \omega _{2n} ^{qs}$.

If $(q+1) \mid s $, then 
$V_{\F}|_{I_K} \sim \omega ^{s'} _n \oplus \omega _n ^{s'} $ 
where $s'=s/(q+1)$.
This contradicts the absolutely irreducibility of $V_{\F}$ 
by considering $V_{\F} \otimes \omega _n ^{-s'}$. 
So we get $(q+1) \nmid s$.
\end{proof}

Let $\mathfrak{S} = W(k)[[u]] $, and $\mathcal{O}_{\mathcal{E}}$ be 
the $p$-adic completion of $\mathfrak{S} [1/u]$. 
We choose elements $\pi _m \in \overline{K}$ such that
$\pi _0 = \pi$ and $\pi ^p _{m+1} = \pi _m$ for $m \geq 0$, 
and put $K_{\infty} = \bigcup _{m\geq 0} K(\pi _m )$.
Let 
$M_{\F} \in \Phi {\mathrm{M}}_{\mathcal{O}_{\mathcal{E}},\F}$ 
be the $\phi$-module 
that corresponds to the $G_{K_{\infty}}$-representation 
$V_{\F} (-1)$. 
Here $(-1)$ denotes the inverse of the Tate twist. 

 From now on, we assume $\F_{q^2} \subset \F$ and 
fix an embedding $k \hookrightarrow \F$. 
This assumption does not matter, 
because we may extend $\F$ to prove the main theorem. 
We consider the isomorphism
\[
 \mathcal{O}_{\mathcal{E}} \otimes _{\mathbb{Z}_p} \F \cong 
 k((u))\otimes _{\F _p} \F \stackrel{\sim}{\to}
 \prod _{\sigma \in \Gal (k/{\F_p})} \F ((u))\ ;\ 
 \Bigl( \sum a_i u^i \Bigr) \otimes b \mapsto
 \Bigl( \sum \sigma (a_i ) b u^i \Bigr) _{\sigma}
\]
and let $\epsilon_{\sigma} \in k((u)) \otimes _{\F_p} \F $ be 
the primitive idempotent corresponding to $\sigma$.
Take $\sigma _1 , \cdots , \sigma _n \in \Gal (k/\F _p)$ such 
that $\sigma _{i+1} = \sigma _i \circ \phi ^{-1}$. 
Here we regard $\phi$ as the $p$-th power Frobenius, 
and use the convention that 
$\sigma _{n+i} =\sigma _i$.
In the following, we often use such conventions.
Then we have 
$\phi (\epsilon _{\sigma _i}) = \epsilon _{\sigma _{i+1}}$, 
and $\phi : M_{\F} \to M_{\F}$ determines 
$\phi :\epsilon _{\sigma _i} M_{\F} \to 
 \epsilon _{\sigma _{i+1}} M_{\F}$.
For $(A_i )_{1 \leq i \leq n} \in GL_2 \bigl( \F ((u)) \bigr)^n$, 
we write
\[
 M_{\F} \sim (A_1 ,A_2 , \ldots ,A_n )=(A_i )_i
\]
if there is a basis $\{ e^i _1 ,e^i _2 \}$ 
of $\epsilon _{\sigma _i} M_{\F}$ over $\F ((u))$ 
such that 
$\phi 
\begin{pmatrix} e^i _1 \\ e^i _2 \end{pmatrix}
 = A_i 
\begin{pmatrix} e^{i+1} _1 \\ e^{i+1} _2 \end{pmatrix}$.
We use the same notation for any sublattice 
$\mathfrak{M} _{\F} \subset M_{\F}$ similarly. 
Here and in the following, we consider only sublattices 
that are $\mathfrak{S}\otimes _{\mathbb{Z}_p} \F$-modules.

Finally, 
for any sublattice 
$\mathfrak{M} _{\F} \subset M_{\F}$ with a chosen 
basis $\{ e^i _1 ,e^i _2 \}_{1 \leq i \leq n}$ and 
$B=(B_i )_{1 \leq i \leq n} \in GL _2 \bigl(\F((u))\bigr)^n$, 
the module 
generated by the entries of 
$\biggl{\langle} B_i 
\begin{pmatrix} e^i _1 \\ e^i _2 \end{pmatrix} 
\biggr{\rangle}$ 
with the basis given by these entries 
is denoted by 
$B\cdot \mathfrak{M} _{\F}$. 
Note that $B\cdot \mathfrak{M} _{\F}$ depends on 
the choice of the basis of $\mathfrak{M} _{\F}$. 

\begin{lem}\label{stracture}
Suppose $V_{\F}$ is absolutely irreducible. 
If $\F'$ is the quadratic extension of $\F$, 
then 
\[
 M_{\F} \otimes _{\F} \F ' \sim 
\Biggl(
\begin{pmatrix}
 0 & \alpha _1 \\ \alpha _1 u^s & 0
\end{pmatrix}
,
\begin{pmatrix}
 \alpha _2 & 0 \\ 0 & \alpha _2
\end{pmatrix}
,
\ldots
,
\begin{pmatrix}
 \alpha _n & 0 \\ 0 & \alpha _n
\end{pmatrix}
\Biggr)
\]
for some $\alpha _i \in (\F')^{\times}$ and a positive integer $s$ 
such that $(q+1) \nmid s$. 
\end{lem}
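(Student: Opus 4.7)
The plan is to derive the matrix form by translating the character description of Lemma~\ref{absirr} to the $\phi$-module side, via an induction/descent construction. Twisting preserves absolute irreducibility, so Lemma~\ref{absirr} applied to $V_\F(-1)$ gives
\[
V_\F(-1)|_{I_K}\otimes_\F\F'\sim\omega_{2n}^s\oplus\omega_{2n}^{qs},
\]
for some positive $s$ with $(q+1)\nmid s$; this is the $s$ appearing in the statement.

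Let $K'$ be the unramified quadratic extension of $K$, with residue field $k'=\F_{q^2}$. Since $\omega_{2n}$ is naturally defined on $I_{K'}$, the representation $V_\F(-1)|_{G_K}\otimes\F'$ is, up to an unramified twist, an induction from a character of $G_{K'}$; using $K_\infty\cap K'=K$, Mackey's formula gives the corresponding statement after restriction to $G_{K_\infty}$, namely that $V_\F(-1)|_{G_{K_\infty}}\otimes\F'$ is induced from a character $\chi$ of $G_{K'_\infty}$ with $\chi|_{I_{K'}}=\omega_{2n}^s$. Under the equivalence of Fontaine, this translates to the statement that $M_\F\otimes\F'$ is the restriction of scalars of a rank-$1$ $\phi$-module $N'$ over $\mathcal{O}_{\mathcal{E}'}\otimes\F'\cong\prod_{\sigma'\in\Gal(k'/\F_p)}\F'((u))$, where $\mathcal{O}_{\mathcal{E}'}$ is the analogue of $\mathcal{O}_\mathcal{E}$ for $K'$. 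Using the standard description of the rank-$1$ $\phi$-module attached to $\omega_{2n}^s$, we can choose a basis so that the defining $\phi$-tuple of $N'$ has a single coordinate of valuation $s$ (say at the $\sigma'_1$-position) and unit entries at the remaining $2n-1$ positions.

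Pairing the $2n$ embeddings of $k'$ according to their restriction to $k$ gives $n$ pairs. The pair containing $\sigma'_1$ produces a $2\times 2$ block with exactly one entry of valuation $s$, which after swapping the two basis vectors at $\sigma_1$ becomes the antidiagonal matrix $A_1=\begin{pmatrix} 0 & \alpha_1 \\ \alpha_1 u^s & 0\end{pmatrix}$; the remaining $n-1$ pairs produce the diagonal blocks $A_i=\alpha_i I$ for $i\geq 2$ after a further rescaling of the basis vectors at each $\sigma_i$-component.

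The main technical obstacle is this final rescaling: equalizing the two off-diagonal entries of $A_1$ and the two diagonal entries of each $A_i$ for $i\geq 2$ amounts to solving a cyclic system of $\phi$-semilinear equations over $\F'((u))$. Verifying that the solutions can be taken to lie in $(\F')^\times$ (without introducing extra $u$-powers onto the diagonal) will rely on the hypothesis $(q+1)\nmid s$, which prevents a norm-type obstruction coming from $\F_{q^2}^\times\to\F_q^\times$ from arising.
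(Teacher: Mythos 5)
Your high-level strategy (identify $M_\F\otimes\F'$ as the restriction of scalars, along $k'((u))\to k((u))$, of a rank-$1$ $\phi$-module over the quadratic unramified extension and then read off the matrix of $\phi$) is genuinely different from the paper's argument and is a reasonable way to think about the structure. The paper does not pass through an abstract induction statement: it directly constructs an explicit $G_{K_\infty}$-fixed basis $e_1,e_2$ of $M_\F\otimes\F'$ inside $(\mathcal{O}_{\mathcal{E}^{\mathrm{ur}}}/p)\otimes V_\F(-1)$, using a carefully chosen element $w_\lambda$ (trivializing the unramified twist $\lambda$), a $(q^2-1)$-st root $u_{2n}$ of $u$, and the idempotents $\epsilon_r$, and then computes $\phi(e_1),\phi(e_2)$. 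With that construction the scalars $\alpha_i$ appearing on the diagonal (resp.\ antidiagonal) of $A_i$ are \emph{automatically} equal, because both come from the $\sigma_{i+1}$-component of the single element $\alpha_\lambda=w_\lambda/\phi(w_\lambda)\in(k\otimes_{\F_p}\F)^\times$.

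This is precisely the point where your proposal has a gap, and you flag it yourself. After restriction of scalars you obtain matrices of the shape $\begin{pmatrix}0&a_1\\ b_1u^s&0\end{pmatrix}$ and $\begin{pmatrix}a_i&0\\ 0&b_i\end{pmatrix}$ with $a_i,b_i\in(\F')^\times$, and you must equalize $a_i=b_i$ by rescaling the two basis vectors at each $\sigma_i$. Writing the rescaling factors as $c_i,d_i\in(\F')^\times$ and setting $x_i=c_i/d_i$, the equations $x_{i+1}=x_i^p(a_i/b_i)$ for $i\geq 2$ together with $x_1^p(a_1/b_1)x_2=1$ force $x_2^{p^n+1}=\mu$ for a specific $\mu\in(\F')^\times$ determined by the $a_i/b_i$; since $p^n+1=q+1$, solvability of this in the \emph{fixed} field $\F'$ is a nontrivial constraint on $\mu$. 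Your proposal offers no argument that $\mu$ lies in the image of the $(q+1)$-st power map on $(\F')^\times$, and the hypothesis $(q+1)\nmid s$ is not what makes it work: that condition is a consequence of absolute irreducibility (Lemma~\ref{absirr}) and enters nowhere in the rescaling. The actual reason the obstruction vanishes is the special form of the $\phi$-module coming from a Galois representation (in the paper's language, the coincidence forced by $\alpha_\lambda$); without tracking this you cannot close the argument. So while your route is conceptually attractive, the final normalization step is a real missing piece, not a formality, and the justification you sketch for it is incorrect.
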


\begin{proof}
Let $K'$ be the quadratic unramified extension of $K$, 
and $k'$ be the residue field of 
$K'$. Then 
\[
 V_{\F} (-1)|_{G_{K'}} \sim \lambda ' \omega _{2n} ^{-s} 
 \oplus \lambda ' \omega _{2n} ^{-qs}
\]
for an unramified character 
$\lambda ' :G_{K'} \to \F^{\times}$ 
and a positive integer $s$ such that $(q+1) \nmid s$ 
by applying Lemma \ref{absirr} to 
$V_{\F} (-1)^*$. 
By taking the quadratic extension $\F'$ of $\F$, 
we can extend $\lambda '$ 
to $\lambda :G_K \to (\F')^{\times}$.
We take a lifting $\widetilde{\Fr} _q \in G_{K_{\infty}}$ of 
the $q$-th Frobenius $\Fr _q$. 
Now we fix a $(q^2 -1)$-st root of $\pi$, 
which is denoted by $\sqrt[q^2 -1]{\pi}$. 
Then we put 
$\widetilde{\alpha} = 
 \widetilde{\Fr} _q (\sqrt[q^2 -1]{\pi})/
 \sqrt[q^2 -1]{\pi} \in \mathcal{O}_{\overline{K}}$,
and let $\alpha$ be the reduction of $\widetilde{\alpha}$ 
in $\overline{k}$. 
We have $\alpha \in \F '$, 
because $\alpha ^{q^2 -1} =1$.
Considering $V_{\F} \otimes _{\F} {\F'}$, we may assume 
$\F =\F '$.

We put $K' _{\infty} = K' \cdot K_{\infty}$. 
Then $(\widetilde{\Fr} _q )^2$ is in $G_{K' _{\infty}}$.
Now we have 
\[
 \frac{(\widetilde{\Fr} _q ) ^2 (\sqrt[q^2 -1]{\pi})}
 {\sqrt[q^2 -1]{\pi}} =
 \frac{(\widetilde{\Fr} _q ) ^2 (\sqrt[q^2 -1]{\pi})}
 {\widetilde{\Fr} _q (\sqrt[q^2 -1]{\pi})}
 \cdot
 \frac{\widetilde{\Fr} _q (\sqrt[q^2 -1]{\pi})}
 {\sqrt[q^2 -1]{\pi}}
 =\frac{\widetilde{\Fr} _q (\widetilde{\alpha} \sqrt[q^2 -1]{\pi}) }
 {\widetilde{\Fr} _q (\sqrt[q^2 -1]{\pi})}
 \widetilde{\alpha}
 =\widetilde{\Fr} _q (\widetilde{\alpha}) \widetilde{\alpha} 
\]  
and 
$\omega _{2n} \bigl( (\widetilde{\Fr} _q )^2 \bigr) = \alpha ^{q+1}$.
Hence we can take $v_1 ,v_2 \in V_{\F}(-1)$ 
so that 
\[
 \widetilde{\Fr} _q (v_1)=\lambda (\widetilde{\Fr} _q )
 \alpha ^{-qs} v_2,\ 
 \widetilde{\Fr} _q (v_2)=\lambda (\widetilde{\Fr} _q )
 \alpha ^{-s} v_1 
\] 
and 
\[
 g(v_1)=\lambda (g)\omega _{2n} ^{-s} (g)v_1,\ 
 g(v_2)=\lambda (g)\omega _{2n} ^{-qs} (g)v_2
\]
for all $g \in G_{K'_{\infty}}$.
We take an element 
$w_{\lambda}$ of $(\overline{k} \otimes _{\F _p} \F )^{\times}$ 
so that 
$g(w_{\lambda}) = 
\bigl( 1\otimes \lambda (g)\bigr) w_{\lambda}$ 
for all $g \in G_K$. 
By this condition, $w_{\lambda}$ is determined up to 
$(k\otimes _{\F _p} \F)^{\times}$.

By the definition of the action of $G_{K_{\infty}}$ on 
$\mathcal{O}_{\mathcal{E}^{\mathrm{ur}}}$, 
we can choose an element $u_{2n}$ 
of $\mathcal{O}_{\mathcal{E}^{\mathrm{ur}}}
 /p\mathcal{O}_{\mathcal{E}^{\mathrm{ur}}}$
so that $u_{2n} ^{q^2 -1} = u$ and 
$\widetilde{\Fr} _q (u_{2n}) = \alpha u_{2n}$.
We consider the isomorphism 
\[
 k' \otimes _{\F_p} \F \stackrel{\sim}{\to} 
 \prod _{\sigma \in \Gal (k'/{\F_p})} \F \ ; \ 
 a \otimes b \mapsto \bigl( \sigma (a) b\bigr)_{\sigma}
\]
and let $\epsilon_0 \in k' \otimes _{\F_p} \F $ be 
the primitive idempotent corresponding to $\id _{k'}$. 
For $0 \leq r \leq 2n-1$, we put $\epsilon _r =\phi ^r \epsilon _0$.
Note that 
$(a^{p^r} \otimes 1)\epsilon _r =(1 \otimes a) \epsilon _r $ 
for all $a\in k'$.

We put
\begin{multline*}
 e_1 =w_{\lambda} ^{-1} \bigl{\{}
 (u_{2n} ^s \otimes 1)(\epsilon _0 v_1 +\epsilon _n v_2)+
 (u_{2n} ^{ps} \otimes 1)(\epsilon _1 v_1 +\epsilon _{n+1} v_2)+ \\
 \cdots +
 (u_{2n} ^{p^{n-1} s} \otimes 1)(\epsilon _{n-1} v_1 +\epsilon _{2n-1} v_2)
 \bigr{\}}, 
\end{multline*}
\begin{multline*}
 e_2 =w_{\lambda} ^{-1} \bigl{\{}
 (u_{2n} ^{p^n s} \otimes 1)(\epsilon _n v_1 +\epsilon _0 v_2)+
 (u_{2n} ^{p^{n+1} s} \otimes 1)(\epsilon _{n+1} v_1 +\epsilon _1 v_2)+ \\
 \cdots +
 (u_{2n} ^{p^{2n-1} s} \otimes 1)(\epsilon _{2n-1} v_1 +\epsilon _{n-1} v_2)
 \bigr{\}}
\end{multline*}
in 
$(\mathcal{O}_{\mathcal{E}^{\mathrm{ur}}} 
/p\mathcal{O}_{\mathcal{E}^{\mathrm{ur}}} ) 
\otimes _{\F _p} V_{\F}(-1)$.
Then $e_1$ and $e_2$ are fixed by 
$g \in G_{K' _{\infty}}$ and $\widetilde{\Fr} _q $.
Hence $e_1$, $e_2$ are fixed by $G_{K_{\infty}}$, and
these are a basis of $\Phi {\mathrm{M}}_{\mathcal{O}_{\mathcal{E}} ,\F }$
over 
$\mathcal{O}_{\mathcal{E}}
 \otimes _{\mathbb{Z}_p} \F $.
We put 
$\alpha _{\lambda} = w_{\lambda}/\phi (w_{\lambda})$.
As $\phi (w_{\lambda} )$ satisfies 
the condition determining $w_{\lambda} $,
the element $\alpha _{\lambda}$ of 
$(\overline{k} \otimes _{\F _p} \F )^{\times}$
 is in $(k \otimes _{\F _p } \F )^{\times}$.
Now we have 
\begin{align*}
 \phi (e_1 ) &=
 \alpha _{\lambda} \bigl{\{}
 (\epsilon _1 + \epsilon _{n+1} ) + \cdots +
 (\epsilon _{n-1} + \epsilon _{2n-1} )
 \bigr{\}}e_1 +
 \alpha _{\lambda}
 (\epsilon _0 + \epsilon _n )e_2 , 
 \\
 \phi (e_2 ) &=
 \alpha _{\lambda} u^s
 (\epsilon _0 + \epsilon _n )e_1 +
 \alpha _{\lambda} \bigl{\{}
 (\epsilon _1 + \epsilon _{n+1} ) + \cdots +
 (\epsilon _{n-1} + \epsilon _{2n-1} )
 \bigr{\}}e_2.
\end{align*}
If we put
\[
 \sigma _1 =\phi ,\ \sigma _2 = \id _k ,\ 
 \sigma _3 = \phi ^{-1} , 
 \ldots ,\ \sigma _n = \phi ^2, 
\]
then we have 
\[
 \epsilon _{\sigma _1 } =\epsilon _{n-1} + \epsilon _{2n-1} ,\ 
 \epsilon _{\sigma _2 } =\epsilon _0 + \epsilon _n ,
 \ldots ,\ 
 \epsilon _{\sigma _n } =\epsilon _{n-2} + \epsilon _{2n-2} 
\] 
and
\[
 M_{\F} \sim 
\Biggl(
\begin{pmatrix}
 0 & \alpha _1 \\ \alpha _1 u^s & 0
\end{pmatrix}
,
\begin{pmatrix}
 \alpha _2 & 0 \\ 0 & \alpha _2
\end{pmatrix}
,
\ldots
,
\begin{pmatrix}
 \alpha _n & 0 \\ 0 & \alpha _n
\end{pmatrix}
\Biggr). 
\]
Here $\alpha _i $ is the 
$\sigma _{i+1}$-th component of $\alpha _{\lambda}$ 
in $\prod _{\sigma \in \Gal (k/{\F_p})} \F$.
\end{proof} 

\section{Main theorem}

\begin{lem}\label{condition}
If $\F'$ is a finite extension of $\F$, 
the elements of
$\mathscr{GR}^{\mathbf{v}}_{V_{\F},0}(\F')$ 
naturally correspond to free
$k[[u]] \otimes _{\F_p} \F'$-submodules 
$\mathfrak{M}_{\F'}\subset M_\F \otimes _{\F} \F'$ 
of rank $2$ that satisfy the following:
\begin{enumerate}
\item
$\mathfrak{M}_{\F'}$ is $\phi$-stable. 
\item 
For some (so any) choice of
$k[[u]] \otimes _{\F_p} \F'$-basis for 
$\mathfrak{M}_{\F'}$, and for each $\sigma \in \Gal (k/\F_p)$, 
the map
\[
 \phi : \epsilon _{\sigma}\mathfrak{M}_{\F'}\to 
 \epsilon _{\sigma \circ \phi ^{-1}}\mathfrak{M}_{\F'}
\]
has determinant $\alpha u^e$ for some 
$\alpha \in \F'[[u]]^{\times}$.
\end{enumerate}
\end{lem}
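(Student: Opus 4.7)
The plan is to recognize this lemma as essentially an unpacking of Kisin's description of $\mathscr{GR}^{\mathbf{v}}_{V_{\F},0}$ in terms of Breuil--Kisin modules, rewritten after introducing the idempotent decomposition of $k[[u]]\otimes_{\F_p}\F'$. I would begin by recalling from \cite{Kis} that an $\F'$-point of $\mathscr{GR}^{\mathbf{v}}_{V_{\F},0}$ is the same as an object $\mathfrak{M}_{\F'}\in(\text{Mod}/\mathfrak{S})_{\F'}$ equipped with an isomorphism of its generic fiber with the $\phi$-module attached to $V_{\F}(-1)\otimes_{\F}\F'$. Under the fully faithful functor that sends $\mathfrak{M}_{\F'}$ to $\mathfrak{M}_{\F'}[1/u]$, such an object is uniquely determined by its image, i.e.\ by a $\mathfrak{S}\otimes_{\mathbb{Z}_p}\F'$-lattice inside $M_{\F}\otimes_{\F}\F'$; this gives the natural correspondence with submodules of $M_{\F}\otimes_{\F}\F'$ satisfying (1), since $\phi$-stability is exactly the condition needed to obtain a $\phi$-submodule.

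Next I would verify that such a lattice is automatically free of rank $2$ over $k[[u]]\otimes_{\F_p}\F'$. This uses that $k[[u]]\otimes_{\F_p}\F'$ is a product of discrete valuation rings (one copy of $\F'[[u]]$ for each $\sigma\in\Gal(k/\F_p)$, via the idempotents $\epsilon_{\sigma}$), so a torsion-free finitely generated module over it whose generic fiber has rank $2$ is necessarily free of rank $2$. This also justifies the component-by-component description: under the idempotent decomposition, $\phi$ shifts the $\sigma$-index by $\phi^{-1}$, giving the claimed maps $\epsilon_{\sigma}\mathfrak{M}_{\F'}\to\epsilon_{\sigma\circ\phi^{-1}}\mathfrak{M}_{\F'}$.

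The main content is condition (2), the translation of the $\mathbf{v}$-type condition into the statement that each of these maps has determinant $\alpha u^{e}$ up to units. In Kisin's setup the datum $\mathbf{v}=(v_{\psi})_{\psi}$ with all $v_{\psi}=1$ picks out finite flat models whose associated Breuil module has its Hodge--Tate-type filtration concentrated in degree $1$ at every embedding, and over $\mathfrak{S}$ this corresponds to the condition that the cokernel of $\phi^{*}\mathfrak{M}_{\F'}\to\mathfrak{M}_{\F'}$ is killed by $E(u)$ with the appropriate ``length one at each embedding'' behaviour. Because $K/K_{0}$ is totally ramified of degree $e$ and $E(u)$ has degree $e$ in $u$, after applying the idempotent $\epsilon_{\sigma}$ the Eisenstein polynomial $E(u)$ becomes a unit multiple of $u^{e}$ in $\F'[[u]]$ (since $E(0)$ is a uniformizer of $W(k)$ and so vanishes modulo $p$). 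Thus the determinant of $\phi$ restricted to the $\sigma$-component, computed in any $\F'[[u]]$-basis, must be of the form $\alpha u^{e}$ with $\alpha\in\F'[[u]]^{\times}$, and conversely a lattice satisfying this on every component is exactly one that meets the $\mathbf{v}$-condition at the special fiber $\mathscr{GR}^{\mathbf{v}}_{V_{\F},0}$.

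The hard part is book-keeping: matching the definition of $\mathbf{v}$ in terms of the embeddings $\psi\colon K\hookrightarrow\overline{K}_{0}$ with the combinatorial parametrization by $\sigma\in\Gal(k/\F_{p})$, and then checking that ``each $v_{\psi}=1$'' really translates to ``$u^{e}$'' (as opposed to $u$ or $u^{en}$) on each component. This amounts to remembering that each $\sigma$ corresponds to a block of $e$ embeddings $\psi$ lifting the same embedding of $k$, so the contributions of these $e$ embeddings combine into the factor $u^{e}$ in the determinant on $\epsilon_{\sigma}\mathfrak{M}_{\F'}$. Once this dictionary is in place the lemma follows by direct comparison with the moduli description in \cite{Kis}.
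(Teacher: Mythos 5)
Your proposal is a correct unpacking of the statement, but the paper's own ``proof'' of this lemma is simply the citation \emph{This is [Gee, Lemma 2.2]}; what you have sketched is, in effect, a reconstruction of Gee's argument rather than a proof the paper itself supplies. Your three steps match what one finds in that reference: identifying $\F'$-points of $\mathscr{GR}^{\mathbf{v}}_{V_{\F},0}$ with $\phi$-stable $\mathfrak{S}\otimes_{\mathbb{Z}_p}\F'$-lattices in $M_{\F}\otimes_{\F}\F'$, using that $k[[u]]\otimes_{\F_p}\F'$ is a product of copies of $\F'[[u]]$ (one per $\sigma\in\Gal(k/\F_p)$) to get freeness of rank $2$ componentwise, and translating the type condition $\mathbf{v}=(1,\ldots,1)$ into the requirement that $\det\phi$ on each $\epsilon_{\sigma}$-component is $u^{e}$ up to a unit, because each $\sigma$ collects the $e$ embeddings $\psi\colon K\hookrightarrow\overline K_0$ lying above it and the Eisenstein polynomial $E(u)$ reduces to a unit times $u^{e}$ modulo $p$ on each factor. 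The one place where you should be a bit more careful is the last point: you should make explicit that the cokernel of $1\otimes\phi\colon\phi^{*}\mathfrak{M}_{\F'}\to\mathfrak{M}_{\F'}$ is killed by $E(u)$ and has $\F'$-length exactly $e$ on each $\sigma$-component (not merely that it is killed by $u^{e}$), since this is what pins down the determinant valuation exactly rather than giving only an upper bound. With that caveat the argument is sound and agrees with the cited source.
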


\begin{proof}
 This is \cite[Lemma 2.2]{Gee}.
\end{proof}

\begin{lem}\label{connection}
Suppose $x_1 ,x_2 \in \mathscr{GR}^{\mathbf{v}}_{V_{\F},0}(\F)$ 
correspond to objects 
$\mathfrak{M}_{1,\F} , \mathfrak{M}_{2,\F}$ of 
$({\mathrm{Mod}}/\mathfrak{S})_{\F}$ respectively. 
Let $N = (N_i )_{1 \leq i \leq n}$ be a nilpotent element of 
$M_2 \bigl( \F((u))\bigr)^n$
such that $\mathfrak{M}_{2,\F}=(1+N)\cdot \mathfrak{M} _{1,\F}$, 
and $A = (A_i )_{1 \leq i \leq n}$ be an element of 
$GL_2 \bigl( \F((u))\bigr)^n$
such that $\mathfrak{M} _{1,\F} \sim A$. 
If 
$\phi (N_i )A_i N_{i+1} \in M_2\bigl( \F[[u]]\bigr)$ 
for all $i$, then there is a morphism 
$\mathbb{P}^1 \to \mathscr{GR}^{\mathbf{v}} _{V_{\F},0}$ 
sending $0$ to $x_1$ and $1$ to $x_2$.
\end{lem}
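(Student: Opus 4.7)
The plan is to write down a one-parameter family of sublattices $\mathfrak{M}_t := (1+tN)\cdot \mathfrak{M}_{1,\F}$ parametrized by $t \in \mathbb{A}^1_{\F}$, verify it defines a morphism into the moduli space, and then extend uniquely across $\infty$ using the projectivity of $\mathscr{GR}^{\mathbf{v}}_{V_{\F},0}$. Concretely, I would fix a basis $\{e^i_1, e^i_2\}$ of $\epsilon_{\sigma_i}\mathfrak{M}_{1,\F}$ in which $\phi$ has matrix $A_i$, and take the new basis $(1+tN_i)\binom{e^i_1}{e^i_2}$. Since every $2\times 2$ nilpotent matrix squares to zero, $(1+tN_{i+1})^{-1} = 1 - tN_{i+1}$, and the matrix of $\phi$ in the new basis is the polynomial
\[
B_i(t) = \bigl(1 + t\phi(N_i)\bigr)\,A_i\,(1 - tN_{i+1}) = A_i + t\bigl(\phi(N_i)A_i - A_i N_{i+1}\bigr) - t^2\,\phi(N_i)A_i N_{i+1}.
\]

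The core task is to check that for every $i$ the matrix $B_i(t)$ lies in $M_2(\F[t][[u]])$ with $\det B_i(t) = \alpha u^e$ for some $\alpha \in \F[t][[u]]^{\times}$, so that the family satisfies the conditions of Lemma \ref{condition} over $\F[t]$. Nilpotency gives $\det(1+t\phi(N_i)) = \det(1-tN_{i+1}) = 1$, whence $\det B_i(t) = \det A_i$ has the required shape. For integrality, the constant term $A_i$ is integral by $\phi$-stability of $\mathfrak{M}_{1,\F}$, and the quadratic term is integral by the standing hypothesis. The linear term $\phi(N_i)A_i - A_i N_{i+1}$ is then forced to be integral by subtracting the other two pieces from $B_i(1)$, which is integral because it is the matrix of $\phi$ on $\mathfrak{M}_{2,\F}$.

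This produces a morphism $\mathbb{A}^1_{\F} \to \mathscr{GR}^{\mathbf{v}}_{V_{\F},0}$ sending $0$ to $x_1$ and $1$ to $x_2$. Since $\mathscr{GR}^{\mathbf{v}}_{V_{\F},0}$ is projective, hence proper, over $\F$, the valuative criterion of properness applied to the local ring of $\mathbb{P}^1_{\F}$ at $\infty$ uniquely extends this morphism to $\mathbb{P}^1_{\F} \to \mathscr{GR}^{\mathbf{v}}_{V_{\F},0}$, as desired.

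The main subtle step is recovering integrality of the coefficient of $t$: the stated hypothesis directly controls only the $t^2$-coefficient, so the $t^1$-integrality must be extracted indirectly from the $\phi$-stability of $\mathfrak{M}_{2,\F}$ — a datum already implicit in $\mathfrak{M}_{2,\F}$ being a point of the moduli. Once this is in hand, the determinant condition and the extension across $\infty$ are essentially formal.
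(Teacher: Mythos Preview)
Your argument is correct. The paper does not give its own proof but simply cites \cite[Lemma~2.4]{Gee}, and the construction you outline---the one-parameter family $(1+tN)\cdot\mathfrak{M}_{1,\F}$ over $\mathbb{A}^1$, the integrality of the $t$-coefficient recovered from $B_i(1)$, and the extension to $\mathbb{P}^1$ by properness---is exactly the argument that Gee gives there.
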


\begin{proof}
 This is \cite[Lemma 2.4]{Gee}.
\end{proof}

\begin{lem}\label{operation}
Suppose $n \geq 2$.
Let $\mathfrak{M}_{\F}$ be the object of 
$({\mathrm{Mod}}/\mathfrak{S})_{\F}$ corresponding to 
a point $x \in \mathscr{GR}^{\mathbf{v}}_{V_{\F},0}(\F)$. 
Fix a basis of $\mathfrak{M}_{\F}$ over 
$k[[u]] \otimes _{\F_p} \F$. 
Consider 
$U^{(i)} = (U^{(i)} _j )_{1 \leq j \leq n}
 \in GL _2 \bigl(\F((u))\bigr)^n$ 
such that 
$U^{(i)} _i = 
\begin{pmatrix}
u & 0 \\ 0 & u^{-1}
\end{pmatrix}$ 
and 
$U^{(i)} _j = 
\begin{pmatrix}
1 & 0 \\ 0 & 1
\end{pmatrix}$ 
for all $j \neq i$. 
If $U^{(i)} \cdot \mathfrak{M}_{\F}$ is $\phi$-stable, 
it corresponds to a point 
$x' \in \mathscr{GR}^{\mathbf{v}}_{V_{\F},0}(\F)$, 
and $x'$ lies on the same connected component of
$\mathscr{GR}^{\mathbf{v}}_{V_{\F},0}$ as $x$. 
\end{lem}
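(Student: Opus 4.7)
The plan is to apply Lemma \ref{connection} directly, producing an explicit nilpotent $N = (N_j)_{1 \le j \le n} \in M_2(\F((u)))^n$ with $(1+N) \cdot \mathfrak{M}_{\F} = U^{(i)} \cdot \mathfrak{M}_{\F}$ and whose components satisfy the cross-term condition $\phi(N_j) A_j N_{j+1} \in M_2(\F[[u]])$ for all $j$, where $\mathfrak{M}_{\F} \sim A = (A_j)_j$.

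First I set $N_j = 0$ for $j \neq i$. The remaining task is to construct a nilpotent $N_i$ such that $(1+N_i) \cdot \epsilon_{\sigma_i}\mathfrak{M}_{\F} = D \cdot \epsilon_{\sigma_i}\mathfrak{M}_{\F}$ as $\F[[u]]$-lattices, where $D = \mathrm{diag}(u, u^{-1})$. Since any $V \in GL_2(\F[[u]])$ acts as a change of basis on such a lattice, it suffices to find $V \in GL_2(\F[[u]])$ with $VD - I$ nilpotent, and then take $N_i = VD - I$. Nilpotency of $VD - I$ forces $\det V = 1$ and $\tr(VD) = 2$; using $p > 2$, one explicit solution is $V = \begin{pmatrix} 0 & 1 \\ -1 & 2u \end{pmatrix}$, giving $N_i = \begin{pmatrix} -1 & u^{-1} \\ -u & 1 \end{pmatrix}$, which indeed satisfies $N_i^2 = 0$.

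With this choice the cross-term condition is automatic: for every $j$, at least one of $N_j$, $N_{j+1}$ vanishes. This uses $n \ge 2$ crucially, since it guarantees $i + 1 \not\equiv i \pmod{n}$ and hence $N_{i+1} = 0$. Lemma \ref{connection} then produces a morphism $\mathbb{P}^1 \to \mathscr{GR}^{\mathbf{v}}_{V_{\F},0}$ sending $0 \mapsto x$ and $1 \mapsto x'$, giving the claim.

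The main subtlety is the construction of $N_i$: because $D$ is not unipotent, one cannot write $D = 1 + (\text{nilpotent})$ directly, so we must pass through a $GL_2(\F[[u]])$-factor which does not change the underlying lattice. The hypothesis $p > 2$ enters through the trace condition, while the hypothesis $n \ge 2$ is precisely what forces the one potentially nontrivial product $\phi(N_i) A_i N_{i+1}$ to vanish.
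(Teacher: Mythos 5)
Your argument is essentially the paper's own proof: the paper likewise takes $N^{(i)}$ supported only in the $i$-th slot, obtained by factoring the diagonal matrix through the unit $\begin{pmatrix} 0 & 1 \\ -1 & 2u \end{pmatrix} \in GL_2\bigl(\F[[u]]\bigr)$ (the very matrix you use), and then applies Lemma \ref{connection}, with $n\geq 2$ forcing all cross-terms $\phi(N_j)A_jN_{j+1}$ to vanish. The one point you pass over is the first assertion of the lemma: before Lemma \ref{connection} can be invoked you must know that $U^{(i)}\cdot \mathfrak{M}_{\F}$ is itself a point $x'$ of $\mathscr{GR}^{\mathbf{v}}_{V_{\F},0}(\F)$, which the paper gets from Lemma \ref{condition}; this is immediate since $\det U^{(i)}_i=1$ leaves the determinant condition $\alpha u^e$ unchanged, but it needs to be stated. (Also, your remark that $p>2$ enters through the trace condition is inessential: you verify $N_i^2=0$ directly, and that computation does not use the characteristic.)
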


\begin{proof}
 First, $U^{(i)} \cdot \mathfrak{M}_{\F}$ corresponds to a point 
$x' \in \mathscr{GR}^{\mathbf{v}}_{V_{\F},0}(\F)$, 
because it satisfies the conditions of Lemma \ref{condition}. 

Next, we consider 
$N^{(i)} =(N^{(i)} _j )_{1 \leq j \leq n} \in M_2 \bigl( \F((u))\bigr)^n$ 
such that 
\[
N^{(i)} _i = 
\begin{pmatrix}
 1 & -u \\ u^{-1} & -1
\end{pmatrix} 
\textrm{ and } N^{(i)} _j = 0 \textrm{ for all } j \neq i. 
\]
Then 
$U^{(i)} \cdot \mathfrak{M}_{\F} = 
 (1+N^{(i)})\cdot \mathfrak{M}_{\F}$, 
because 
$\begin{pmatrix}
  u^{-1} & 0 \\ 0 & u
 \end{pmatrix}
 =
 \begin{pmatrix}
  0 & 1 \\ -1 & 2u
 \end{pmatrix}
 \begin{pmatrix}
  2 & -u \\ u^{-1} & 0 
 \end{pmatrix}$. 
So we can apply Lemma \ref{connection}.
\end{proof}

\begin{thm}\label{main}
Let $\F'$ be a finite extension of $\F$. 
Suppose $x_1 ,x_2 \in \mathscr{GR}^{\mathbf{v}}_{V_{\F},0}(\F')$ 
correspond to objects 
$\mathfrak{M}_{1,\F'} , \mathfrak{M}_{2,\F'}$ of 
$({\mathrm{Mod}}/\mathfrak{S})_{\F'}$ respectively.
If $\mathfrak{M}_{1,\F'}$ and $\mathfrak{M}_{2,\F'}$
 are both non-ordinary, then $x_1$ and $x_2$ lie on the same connected
 component of
 $\mathscr{GR}^{\mathbf{v}}_{V_{\F},0}$.
\end{thm}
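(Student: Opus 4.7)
The plan is to reduce the theorem to an explicit combinatorial problem about $\phi$-stable sublattices and then construct deformations between any two non-ordinary ones using Lemmas \ref{connection} and \ref{operation}. First I would reduce to the case $\F' = \F$ (enlarging $\F$ if necessary, since the connectivity assertion is preserved by a finite extension of scalars via the canonical morphism of moduli spaces) and then split into cases according to whether $V_{\F}$ is absolutely irreducible or absolutely reducible. In the absolutely irreducible case, Lemma \ref{stracture} gives an explicit basis realizing the Frobenius on $M_{\F}$ as $(A_1, A_2, \ldots, A_n)$ with $A_1$ antidiagonal involving $u^s$ and the remaining $A_i$ scalar. In the reducible case, I would instead use the filtration of $V_{\F}$ by a $G_K$-stable line to write $M_{\F}$ in an upper triangular normal form.

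In either case, Lemma \ref{condition} encodes an $\F$-point of the moduli space as a $\phi$-stable sublattice $\mathfrak{M}_{\F}$, which we write as $B \cdot \mathfrak{M}_{\F}^{\mathrm{std}}$ for $B \in GL_2(\F((u)))^n$ relative to a chosen standard lattice. The determinant condition from Lemma \ref{condition}(2), together with Smith normal form applied to each $B_i$, limits the possible shapes at each index $i$ to a small finite list. I would then fix a \emph{canonical} non-ordinary lattice $\mathfrak{M}_{\F}^0$ and show that every non-ordinary lattice connects to it by a sequence of the two available tools: Lemma \ref{operation} provides a discrete move within the same connected component when $U^{(i)} \cdot \mathfrak{M}_{\F}$ happens to be $\phi$-stable, while Lemma \ref{connection} provides a $\mathbb{P}^1$-family joining $\mathfrak{M}_{\F}$ to $(1+N) \cdot \mathfrak{M}_{\F}$ whenever the integrality condition $\phi(N_i) A_i N_{i+1} \in M_2(\F[[u]])$ holds at every index.

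The main obstacle will be managing the integrality condition in Lemma \ref{connection}: it couples the nilpotent data at consecutive indices through $A_i$, so the admissible deformation at index $i$ depends on the shape of $\mathfrak{M}_{\F}$ at index $i+1$. The asymmetric presence of $u^s$ in $A_1$ (with $(q+1) \nmid s$, coming from the fundamental character of level $2n$) forces the analysis near index $1$ to be qualitatively different from the analysis at other indices: one must choose $N_1$ and $N_2$ so that this factor is absorbed, and simultaneously check that the candidate $U^{(i)}$ in Lemma \ref{operation} actually produces a $\phi$-stable lattice. A further subtlety is that the discrete moves and the $\mathbb{P}^1$-families must be arranged so that the endpoints remain in the non-ordinary locus (even though intermediate points of a $\mathbb{P}^1$ inside the full moduli space are allowed to be ordinary), because the hypothesis of the theorem has to be preserved when iterating. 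Carrying out this reduction systematically across all the finitely many shapes—and in particular confirming that every non-ordinary configuration reaches $\mathfrak{M}_{\F}^0$ in boundedly many steps—is the combinatorial heart of the argument.
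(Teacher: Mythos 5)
Your overall strategy matches the paper: reduce to $\F'=\F$, split by reducibility, invoke Lemma~\ref{stracture} to get the antidiagonal-at-index-$1$ normal form in the absolutely irreducible case, write an arbitrary $\phi$-stable lattice as $B\cdot\mathfrak{M}_{\F}^{\mathrm{std}}$ via Iwasawa decomposition, and connect to a canonical lattice by alternating the discrete moves of Lemma~\ref{operation} with the $\mathbb{P}^1$-families of Lemma~\ref{connection}. But the proposal stops precisely where the paper's proof begins doing work. The assertion that ``the determinant condition \ldots limits the possible shapes at each index $i$ to a small finite list'' is not true a priori: the exponents $s_i$ (reducible case) or $a_i,r_i$ (irreducible case) are initially unbounded, and most of the paper's proof is the inequality bookkeeping that first brings them into a bounded window ($|s_i|\le 1$, or $|s_i-t_i|\le p+1$ and $|a_i|\le 1$) and then shows the remaining obstructions vanish. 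You would need to reproduce the termination argument for the $U^{(i)}$-moves, the separate treatments of $e=p-1$, $p\le e\le p+1$, and $e\ge p+2$, and, in the irreducible case, the cyclic sums like $\sum p^j(s_{i-j}-t_{i-j})$ against $p^n+1$ that pin down $a_i$ and $r_i$. None of this is a routine enumeration; it is the content of the theorem, and ``Smith normal form'' (which requires two-sided operations) is the wrong tool here — the one-sided Iwasawa decomposition is what keeps a distinguished basis of the reference lattice in play.

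There is also a conceptual misstatement in how you invoke non-ordinarity. You say the discrete moves and $\mathbb{P}^1$-families ``must be arranged so that the endpoints remain in the non-ordinary locus \ldots because the hypothesis has to be preserved when iterating.'' The paper does not need, and does not establish, that intermediate lattices stay non-ordinary. Instead it runs the reduction unconditionally and only uses the non-ordinarity of the two given lattices $\mathfrak{M}_{1,\F}$, $\mathfrak{M}_{2,\F}$ to derive contradictions at the \emph{terminal} bad configurations (e.g.\ if the $s_i$ cannot all be brought to $0$, one is forced into $v_u(c_j)=0$ for all $j$, or into a split extension of a multiplicative module by an \'etale one, either of which contradicts a hypothesis). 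In the absolutely irreducible case non-ordinarity is automatic, so the issue does not even arise. Reframing the argument as ``preserve non-ordinarity along the chain'' would send you looking for invariants that are not there; the correct framing is ``show the reduction process terminates at a canonical lattice, and use non-ordinarity of the original two lattices only to rule out the stuck configurations.''
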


\begin{proof}
When $n=1$, this was proved in \cite{Kis}.
If $e<p-1$, then 
$\mathscr{GR}^{\mathbf{v}}_{V_{\F},0}(\F')$ 
is one point by \cite[Theorem 3.3.3]{Ray}. 
So we may assume $n \geq 2$ and $e \geq p-1$. 
Furthermore, replacing $V_{\F}$ by $V_{\F} \otimes _{\F} \F'$, 
we may assume $\F =\F'$.

Suppose first that $V_{\F}$ is reducible.
We can choose a basis so that
$\mathfrak{M}_{1,\F} \sim 
A =(A_i )_{1 \leq i \leq n} \in M_2 \bigl( \F[[u]]\bigr)^n$ 
where 
$A_i =
\begin{pmatrix}
a_i & b_i \\ 0 & c_i
\end{pmatrix}$ 
for $a_i ,b_i ,c_i \in \F[[u]]$, 
because $M_{\F}$ is reducible and 
$\mathfrak{M}_{1,\F}$ is $\phi$-stable. 
By the Iwasawa decomposition and the determinant conditions, 
we can take 
$B=(B_i )_{1 \leq i \leq n} \in GL_2 \bigl( \F((u))\bigr)^n$ 
such that 
$\mathfrak{M}_{2,\F} = B \cdot \mathfrak{M}_{1,\F}$ 
and 
$B_i = 
 \begin{pmatrix}
 u^{-s_i} & v_i \\ 0 & u^{s_i}
 \end{pmatrix}$ 
for $s_i \in \mathbb{Z}$ and 
$v_i \in \F((u))$. 
Then 
$\mathfrak{M}_{2,\F} \sim 
\bigl( \phi (B_i)A_i B_{i+1} ^{-1} \bigr)_i$, 
and we have 
\begin{align*}
 \phi (B_i) A_i B_{i+1} ^{-1}
  & = 
 \begin{pmatrix}
  u^{-ps_i} & \phi (v_i) \\ 0 & u^{ps_i}
 \end{pmatrix}
 \begin{pmatrix}
  a_i & b_i \\ 0 & c_i
 \end{pmatrix}
 \begin{pmatrix}
  u^{s_{i+1}} & -v_{i+1} \\ 0 & u^{-s_{i+1}}
 \end{pmatrix} \\
 &=
 \begin{pmatrix}
  a_i u^{-ps_i +s_{i+1}} &
  -a_i v_{i+1} u^{-ps_i} +b_i u^{-ps_i -s_{i+1}}+c_i \phi (v_i) u^{-s_{i+1}} \\
  0 & c_i u^{ps_i -s_{i+1}}
 \end{pmatrix}.
\end{align*}
In the last matrix, every component is integral
because $\mathfrak{M}_{2,\F}$ is $\phi$-stable. 

First of all, we want to reduce the problem to the case where 
$s_i =0$ for all $i$. 
When $e=p-1$, we have 
$0 \leq v_u (c_i) \leq p-1$ and 
$0 \leq v_u (c_i)+ps_i -s_{i+1} \leq p-1$ 
for all $i$ by the determinant conditions. From the second set 
of inequalities, we obtaine 
\[
 0 \leq 
 \sum _{j=0} ^{n-1} 
 \bigl\{ v_u (c_{i-1-j}) + ps_{i-1-j} -s_{i-j} \bigr\}
 p^j 
 \leq p^n -1, 
\] 
and we have 
\[
 \sum _{j=0} ^{n-1} 
 \bigl\{ v_u (c_{i-1-j}) + ps_{i-1-j} -s_{i-j} \bigr\}
 p^j =
 (p^n -1)s_i +
 \sum _{j=0} ^{n-1} v_u (c_{i-1-j}) p^j .
\]
Combining these with $0 \leq v_u (c_i) \leq p-1$, 
we get $-1 \leq s_i \leq 1$. 
If $s_i =1$ for some $i$, 
the second sign of the above inequality must be 
the equality sign. 
So we get $v_u (c_j)=0$ for all $j$. 
This contradicts the non-ordinarity of 
$\mathfrak{M}_{1,\F}$. 
If $s_i =-1$ for some $i$, 
the first sign of the above inequality must be 
the equality sign. 
So we get $v_u (c_j) +ps_j -s_{j+1} =0$ for all $j$. 
This contradicts the non-ordinarity of 
$\mathfrak{M}_{2,\F}$. 
Hence, we have $s_i =0$ for all $i$. 
So we may assume $e \geq p$. 

We consider $U^{(i)}$ as in Lemma \ref{operation}. 
If $s_i >0$ and $U^{(i)} \cdot \mathfrak{M}_{2,\F}$ 
is $\phi$-stable, 
we may replace $\mathfrak{M}_{2,\F}$ with
$U^{(i)} \cdot \mathfrak{M}_{2,\F}$ by Lemma \ref{operation}. 
This replacement changes $s_i$ into $s_i -1$ and 
$v_i$ into $uv_i$. 
If $s_i <0$, switching $\mathfrak{M}_{1,\F}$ with 
$\mathfrak{M}_{2,\F}$ so that we have $s_i >0$, 
we consider the same replacement as above. 
Note that these replacements decrease 
$|s_i|$ by $1$.
We prove that we can continue these replacements 
until we get to the case where $s_i =0$ for all $i$. 
Suppose that we cannot continue the replacements 
and there is some nonzero $s_i$.
Take an index $i_0$ such that $|s_{i_0}|$ is the greatest.
By switching $\mathfrak{M}_{1,\F}$ with $\mathfrak{M}_{2,\F}$, 
we may assume $s_{i_0} >0$. 
As we cannot continue the replacements, 
we cannot decrease $s_{i_0}$ keeping the 
$\phi$-stability, that is, 
\[
 v_u (c_{i_0}) + ps_{i_0} -s_{{i_0}+1}\leq p-1 
 \textrm{ or } 
 v_u (a_{{i_0}-1}) -ps_{{i_0}-1} +s_{i_0} =0.
\]

If $v_u (c_{i_0}) + ps_{i_0} -s_{{i_0}+1}\leq p-1$, 
we have $s_{i_0} =1$, $v_u (c_{i_0})=0$ and $s_{{i_0}+1}=1$, 
because 
$v_u (c_{i_0}) + (p-1)s_{i_0} +(s_{i_0} -s_{{i_0}+1}) \leq p-1$. 
Now we have $v_u (a_{i_0}) -ps_{i_0} +s_{{i_0}+1}\geq 1$, because 
$e\geq p$ and $v_u (c_{i_0}) + ps_{i_0} -s_{{i_0}+1}\leq p-1$.
As $s_{{i_0}+1}$ cannot be decreased, 
$v_u (c_{{i_0}+1}) + ps_{{i_0}+1} -s_{{i_0}+2}\leq p-1$.
The same argument shows that $v_u (c_i)=0$ and 
$s_i =1$ for all $i$. 
This contradicts the non-ordinarity 
of $\mathfrak{M}_{1,\F}$.

If $v_u (a_{{i_0}-1}) -ps_{{i_0}-1} +s_{i_0} =0$, 
then $s_{{i_0}-1} >0$ 
and $v_u (c_{{i_0}-1}) +ps_{{i_0}-1} -s_{i_0} =e \geq p$. 
As $s_{{i_0}-1}$ cannot be decreased, 
$v_u (a_{{i_0}-2}) -ps_{{i_0}-2} +s_{{i_0}-1} =0$.
The same argument shows that 
$v_u (a_i) -ps_i +s_{i+1} =0$ for all $i$. 
So we have that 
$\mathfrak{M}_{2,\F}$ is 
an extension of a multiplicative module 
by an \'etale module. 
We show that such an extension splits. 
Now we have 
$\mathfrak{M}_{2,\F} \sim 
 \Biggl(
 \begin{pmatrix}
 a_i ' & b_i ' \\
 0 & u^e c_i '
 \end{pmatrix}
 \Biggr)_i$ 
for $a_i ',c_i ' \in \F[[u]]^{\times}$ and 
$b_i ' \in \F[[u]]$. 
Then 
\[ 
 \Biggl(
 \begin{pmatrix}
 1 & v_i ' \\
 0 & 1 
 \end{pmatrix}
 \Biggr)_i \cdot 
 \mathfrak{M}_{2,\F} \sim 
 \Biggl(
 \begin{pmatrix}
 a_i ' & -a_i ' v_{i+1} ' +b_i '+u^e c_i '\phi(v_i ') \\
 0 & u^e c_i '
 \end{pmatrix}
 \Biggr)_i
\]
for $v_i '\in \F[[u]]$. 
It suffices to show that 
there is $(v_i ')_{1\leq i \leq n} \in \F[[u]]^n$ 
such that 
$a_i ' v_{i+1} ' =b_i ' +u^e c_i '\phi(v_i ')$ 
for all $i$, 
and we can solve the system of equations 
by finding $v_i '$ successively 
in ascending order of their degrees. 
Hence we have that 
$\mathfrak{M}_{2,\F}$ is ordinary, 
and this is a contradiction. 

Thus we may assume $s_i =0$ for all $i$. 
Consider 
$N=(N_i )_{1 \leq i \leq n} \in M_2 \bigl( \F((u))\bigr)^n$ 
such that 
$N_i =
\begin{pmatrix}
  0 & v_i \\ 0 & 0
\end{pmatrix}$ 
for $v_i \in \F((u))$. 
Then we have 
$\mathfrak{M}_{2,\F}=(1+N)\cdot \mathfrak{M}_{1,\F}$ 
and 
$\phi (N_i)
\begin{pmatrix}
 a_i & b_i \\ 0 & c_i 
\end{pmatrix} N_{i+1}=0$.
Hence $x_1$ and $x_2$ lie on the same 
connected component by Lemma \ref{connection}. 
This completes the proof in the case where 
$V_{\F}$ is reducible. 

 From now on, we consider the case where 
$V_{\F}$ is irreducible.
If $V_{\F}$ is reducible after extending the base field $\F$, 
we can reduce this case to the reducible case.
So we may assume $V_{\F}$ is absolutely irreducible.
Extending the field $\F$, we have
\[
 M_{\F} \sim 
\Biggl(
\alpha _1
\begin{pmatrix}
 0 & 1 \\ u^s & 0
\end{pmatrix}
,
\alpha _2
\begin{pmatrix}
 1 & 0 \\ 0 & 1
\end{pmatrix}
,
\ldots
,
\alpha _n
\begin{pmatrix}
 1 & 0 \\ 0 & 1
\end{pmatrix}
\Biggr)
\]
for some $\alpha _i \in \F^{\times}$ and 
a positive integer $s$ by Lemma \ref{stracture}. 
This basis gives a sublattice $\mathfrak{M}_{\F}$.
By the Iwasawa decomposition, we can take 
$s_i ' ,t_i ' \in \mathbb{Z}$ and 
$v_i ' \in \F((u))$ so that 
$\mathfrak{M}_{1,\F} =
 \Biggl( 
 \begin{pmatrix}
 u^{s_i '} & v_i ' \\ 0 & u^{t_i '}
 \end{pmatrix}
 \Biggr)_i \cdot 
 \mathfrak{M}_{\F}$. 
Changing the basis by 
$\Biggl( 
 \begin{pmatrix}
 u^{s_i '} & 0 \\ 0 & u^{t_i '}
 \end{pmatrix} 
 \Biggr)_i$, 
we get
\[
 M_{\F} \sim 
\Biggl(
\alpha _1
\begin{pmatrix}
 0 & u^{s_1} \\ u^{t_1} & 0
\end{pmatrix}
,
\alpha _2
\begin{pmatrix}
 u^{s_2} & 0 \\ 0 & u^{t_2}
\end{pmatrix}
,
\ldots
,
\alpha _n
\begin{pmatrix}
 u^{s_n} & 0 \\ 0 & u^{t_n}
\end{pmatrix}
\Biggr).
\]
Here we have $0 \leq t_1$, 
$0 \leq s_i , t_i \leq e$ for 
$2\leq i \leq n$, 
and $s_i + t_i =e$ for all $i$ 
by the $\phi$-stability and the determinant conditions of 
$\mathfrak{M}_{1,\F}$. 

We are going to change the basis so that 
we have moreover $t_1 \leq e$. 
Changing the basis of the $i$-th component by 
$
\begin{pmatrix}
 u & 0 \\ 0 & u^{-1}
\end{pmatrix}$, 
we get the following transformations: 
\begin{align*}
 T_i &:t_i \leadsto t_i -p,\ t_{i-1} \leadsto t_{i-1} +1 
 \ \textrm{for}\ i\neq 2, 
\\ T_2 &: t_2 \leadsto t_2 -p,\ t_1 \leadsto t_1 -1. 
\end{align*}
If $t_1>e$, we put 
\[
 m=\max \{\ 1\leq i \leq n \ |\ t_i \neq e\ \}, 
\]
and carry out $T_1$ when $m=n$, 
and $T_{m+1} , T_{m+2} , \ldots , T_n , T_1$ 
when $m \neq n$.
Then $0\leq s_i , t_i \leq e$ for 
$2\leq i \leq n$, and 
$t_1$ decrease by $p$ when $m \neq 1$, 
by $p+1$ when $m=1$.
Repeat this until we get to 
the situation where $t_1 \leq e$.
If $e\geq p$, we get to 
the situation where $0\leq s_1 ,t_1 \leq e$.
If $e=p-1$ and we do not get to 
the situation where $0\leq s_1 ,t_1 \leq p-1$, 
then we have 
\[
M_{\F} \sim
\Biggl(
\alpha _1
\begin{pmatrix}
 0 & u^{-1} \\ u^p & 0
\end{pmatrix}
,
\alpha _2
\begin{pmatrix}
 1 & 0 \\ 0 & u^{p-1}
\end{pmatrix}
,
\ldots
,
\alpha _n
\begin{pmatrix}
 1 & 0 \\ 0 & u^{p-1}
\end{pmatrix}
\Biggr).
\] 
In this case, changing the basis by
$
\Biggl(
\begin{pmatrix}
 1 & u^{-1} \\ 0 & 1
\end{pmatrix} 
\Biggr)_i$, 
we get 
\[
M_{\F} \sim
 \Biggl(
\alpha _1
\begin{pmatrix}
 1 & 0 \\ u^p & -u^{p-1}
\end{pmatrix}
,
\alpha _2
\begin{pmatrix}
 1 & 0 \\ 0 & u^{p-1}
\end{pmatrix}
,
\ldots
,
\alpha _n
\begin{pmatrix}
 1 & 0 \\ 0 & u^{p-1}
\end{pmatrix}
\Biggr).
\]
This contradicts that $M_{\F}$ is irreducible.
Hence we obtain a basis such that
\[
 M_{\F} \sim 
\Biggl(
\alpha _1
\begin{pmatrix}
 0 & u^{s_1} \\ u^{t_1} & 0
\end{pmatrix}
,
\alpha _2
\begin{pmatrix}
 u^{s_2} & 0 \\ 0 & u^{t_2}
\end{pmatrix}
,
\ldots
,
\alpha _n
\begin{pmatrix}
 u^{s_n} & 0 \\ 0 & u^{t_n}
\end{pmatrix}
\Biggr)
\]
for some $s_i$ and $t_i$ satisfying $s_i + t_i =e$ 
and $0 \leq s_i ,t_i \leq e$
for all $i$.
Let $\mathfrak{M}_{0,\F}$ be the sublattice of 
$M_{\F}$ determined by this basis.
Note that $\mathfrak{M}_{0,\F}$ satisfies 
the conditions of Lemma \ref{condition}, 
and let $x_0$ be the point of 
$\mathscr{GR}^{\mathbf{v}}_{V_{\F},0}$ 
corresponding to $\mathfrak{M}_{0,\F}$.

We prove that we can change 
$(t_i )_{1 \leq i \leq n}$ furthermore 
by $T_i$'s or $T_i ^{-1}$'s 
keeping $0 \leq t_i \leq e$ for all $i$, 
and get to the situation where 
$|s_i  -t_i |\leq p+1$ for all $i$. 
By Lemma \ref{operation}, these changes do not affect 
which of the connected components $x_0$ lies on. 
If $e\leq p+1$, this is satisfied automatically.
So we may assume $e\geq p+2$.
We prove that 
if there is an index $j$ such that 
$|s_j -t_j |\geq p+2$, then there is an index $j_0$ 
such that $|s_{j_0} -t_{j_0} |\geq p+2$ and 
we can change $t_{j_0}$ by $T_{j_0}$ or $T_{j_0} ^{-1}$
so that $|s_{j_0} -t_{j_0} |$ decreases 
keeping $0 \leq t_i \leq e$ for all $i$.
We put 
$h_i =(-1)^{[(i-2)/n]} (s_i -t_i)$ 
for $i \in \mathbb{Z}$. 
By assumption, there is an integer 
$j_0$ such that 
$1 \leq j_0 \leq 2n$, 
$h_{j_0} \geq p+2$ and $h_{j_0 -1} <e$. 
If $2\leq j_0 \leq n+1$, 
we can change $t_{j_0}$ by 
$T_{j_0} ^{-1}$, 
otherwise by $T_{j_0}$, 
so that $|s_{j_0} -t_{j_0} |$ decreases 
keeping $0 \leq t_i \leq e$ for all $i$.
Thus we have proved the claim.
Hence if 
$|s_j -t_j |\geq p+2$ for an index $j$, 
we can carry out 
$T_{j_0}$ or $T_{j_0} ^{-1}$ for an index $j_0$ as above 
and this operation decreases 
$\sum _{i=1} ^n |s_i -t_i |$ by at least $2$.
So after finitely many operations, we get to 
the situation where $|s_i -t_i |\leq p+1$ for all $i$.

Hence we may assume that $s_i$ and $t_i$ satisfy 
$s_i + t_i =e$, $0 \leq s_i ,t_i \leq e$ 
and $|s_i -t_i| \leq p+1$ for all $i$. 
We are going to prove that 
$x_0$ and $x_1$ lie on the same connected component. 
We can prove that 
$x_0$ and $x_2$ lie on the same connected component 
by the same argument.
 
By the Iwasawa decomposition and the determinant conditions, 
we can take 
$B=(B_i )_{1 \leq i \leq n} \in GL_2 \bigl( \F((u))\bigr)^n$ 
such that 
$\mathfrak{M}_{1,\F}=B \cdot \mathfrak{M}_{0,\F}$ 
and 
$B_i=
\begin{pmatrix}
 u^{-a_i} & v_i \\ 0 & u^{a_i}
\end{pmatrix}$ 
for $a_i \in \mathbb{Z}$ 
and $v_i \in \F((u))$. 
Then we put $r_i =v_u (v_i)$. 
Now we have
\begin{align*}
 \phi (B_1 )
 \begin{pmatrix}
  0 & u^{s_1} \\ u^{t_1} & 0
 \end{pmatrix}
 B_2 ^{-1}
 &=
 \begin{pmatrix}
  \phi (v_1 ) u^{t_1 +a_2} & u^{s_1 -pa_1 -a_2} - \phi (v_1 )v_2 u^{t_1}
  \\ u^{t_1 + pa_1 +a_2} & -v_2 u^{t_1 +pa_1}
 \end{pmatrix},\\
 \phi (B_i )
 \begin{pmatrix}
  u^{s_i} & 0 \\ 0 & u^{t_i}
 \end{pmatrix}
 B_{i+1} ^{-1}
 &=
 \begin{pmatrix}
  u^{s_i -pa_i +a_{i+1}} & \phi (v_i )u^{t_i -a_{i+1}} - v_{i+1} u^{s_i -pa_i}
  \\ 0 & u^{t_i +pa_i -a_{i+1}}
 \end{pmatrix}
\end{align*}
for $2\leq i \leq n$. 
On the right-hand sides, every component of the matrices 
is integral because $\mathfrak{M}_{1,\F}$ is $\phi$-stable. 

First, we consider the case 
$t_1 + pa_1 +a_2 >e$.
In this case, 
\[
 (pr_1 + t_1 +a_2)+(r_2 +t_1 +pa_1)=e
 , \ 
 s_1-pa_1 -a_2 = pr_1 +r_2 +t_1 < 0
\]
by the $\phi$-stability and the determinant conditions of 
$\mathfrak{M}_{1,\F}$. 
We have $a_1 > r_1$, because 
$t_1 +pa_1 +a_2 >e \geq pr_1 +t_1 +a_2$.
Similarly, we have $a_2 > r_2 $, because 
$t_1 +pa_1 +a_2 >e \geq r_2 +t_1 +pa_1$.

We consider the following operations: 
\[
 a_i \leadsto a_i -1,\ v_i \leadsto uv_i ,\ 
 \textrm{if it preserves the}\ \phi \textrm{-stability of}\ 
 B\cdot \mathfrak{M}_{0,\F}.
\]
These operations replace $x_1$ by a point 
that lies on the same connected component as $x_1$ 
by Lemma \ref{operation}.
We prove that we can continue these operations until we get to 
the situation where $t_1 +pa_1 +a_2 \leq e$. 
In other words, 
we reduce the problem to the case $t_1 +pa_1 +a_2 \leq e$. 
If we can continue the operations endlessly, 
we get to the situation where 
$t_1 +pa_1 +a_2 \leq e$, 
because the conditions $s_i -pa_i +a_{i+1} \geq 0$ for 
$2\leq i \leq n$ exclude 
that both $a_1$ and $a_2$ remain bounded below. 
Suppose we cannot continue the operations.
This is equivalent to the following condition:
\begin{align*}
 s_n -pa_n +a_1 &=0\ {\textrm{or}}\ r_2 +t_1 +pa_1 \leq p-1, \\
 pr_1 +t_1 +a_2 &=0\ {\textrm{or}}\ t_2 +pa_2 -a_3 \leq p-1, \\
 s_{i-1} -pa_{i-1} +a_i &=0\ {\textrm{or}}\ t_i +pa_i -a_{i+1} \leq p-1 
 \ \textrm{for each}\ 3\leq i \leq n. 
\end{align*}
If $e\geq p$, there are only the following two cases, 
because $(pr_1 + t_1 +a_2)+(r_2 +t_1 +pa_1)=e$ and 
$(s_i -pa_i +a_{i+1})+(t_i +pa_i -a_{i+1})=e$ for $2\leq i \leq n$.
\begin{align*}
 &\textrm{Case\,1}:pr_1 +t_1 +a_2=0,\ s_i -pa_i +a_{i+1}=0\ 
 \textrm{for}\ 2\leq i \leq n. \\
 &\textrm{Case\,2}:r_2 +t_1 +pa_1\leq p-1,\ t_i +pa_i -a_{i+1} \leq p-1\ 
 \textrm{for}\ 2\leq i \leq n.
\end{align*}
If $e=p-1$, clearly it is in Case\,2.

In the Case\,1.
Suppose that there is an index $i$ such that 
$2 \leq i \leq n$ and 
$pr_i +t_i -a_{i+1} \neq r_{i+1} +s_i -pa_i$.
Then both sides are non-negative,
because 
$v_u (\phi (v_i )u^{t_i -a_{i+1}} - v_{i+1} u^{s_i -pa_i})\geq 0$.
Comparing $r_{i+1} +s_i -pa_i \geq 0$ with $s_i -pa_i +a_{i+1}=0$, we get 
$r_{i+1} \geq a_{i+1}$.
Then 
$pr_{i+1} +t_{i+1} -a_{i+2} \geq pa_{i+1} + t_{i+1} -a_{i+2}\geq 0$, 
and $r_{i+2} +s_{i+1} -pa_{i+1} \geq 0$ 
because 
$v_u (\phi (v_{i+1})u^{t_{i+1} -a_{i+2}} -v_{i+2} u^{s_{i+1} -pa_{i+1}})\geq 0$. 
Comparing $r_{i+2} +s_{i+1} -pa_{i+1} \geq 0$ 
with $s_{i+1} -pa_{i+1} +a_{i+2}=0$, 
we get $r_{i+2} \geq a_{i+2}$. 
The same argument goes on and shows $r_1 \geq a_1$.
This is a contradiction.
Thus $pr_i +t_i -a_{i+1} = r_{i+1} +s_i -pa_i$ for all 
$2 \leq i \leq n$. 
Now we change the basis of 
\[
 M_{\F} \sim 
\Biggl(
\alpha _1
\begin{pmatrix}
 0 & u^{s_1} \\ u^{t_1} & 0
\end{pmatrix}
,
\alpha _2
\begin{pmatrix}
 u^{s_2} & 0 \\ 0 & u^{t_2}
\end{pmatrix}
,
\ldots
,
\alpha _n
\begin{pmatrix}
 u^{s_n} & 0 \\ 0 & u^{t_n}
\end{pmatrix}
\Biggr)
\]
by 
$
\Biggl(
\begin{pmatrix}
 u^{-a_i} & u^{r_i} \\ 0 & u^{a_i}
\end{pmatrix}
\Biggr)_i$. 
Then we have 
\[
 M_{\F} \sim 
\Biggl(
\alpha _1
\begin{pmatrix}
 1 & 0 \\ u^{t_1 +pa_1 +a_2} & -u^e
\end{pmatrix}
,
\alpha _2
\begin{pmatrix}
 1 & 0 \\ 0 & u^e
\end{pmatrix}
,
\ldots
,
\alpha _n
\begin{pmatrix}
 1 & 0 \\ 0 & u^e
\end{pmatrix}
\Biggr),
\]
and this contradicts that $M_{\F}$ is irreducible.

In the Case\,2.
Suppose that there is an index $i$ such that $2\leq i \leq n$ and 
$pr_i +t_i -a_{i+1} \neq r_{i+1} + s_i -pa_i$. 
Then both sides are non-negative,
because 
$v_u (\phi (v_i )u^{t_i -a_{i+1}} - v_{i+1} u^{s_i -pa_i})\geq 0$.
Comparing $pr_i +t_i -a_{i+1} \geq 0$ with $t_i +pa_i -a_{i+1} \leq p-1$, 
we get $r_i \geq a_i$. Then 
$r_i +s_{i-1} -pa_{i-1} \geq s_{i-1} -pa_{i-1} + a_i \geq 0$, 
and $pr_{i-1} + t_{i-1} -a_i \geq 0$ because 
 $v_u (\phi (v_{i-1})u^{t_{i-1} -a_i} 
 - v_i u^{s_{i-1} -pa_{i-1}})\geq 0$. 
Comparing $pr_{i-1} +t_{i-1} -a_i \geq 0$ with 
$t_{i-1} +pa_{i-1} -a_i \leq p-1$, 
we get $r_{i-1} \geq a_{i-1}$. 
The same argument goes on 
and shows that $r_2 \geq a_2$. This is a contradiction.

The above argument shows that 
\[
 r_i <a_i,\ 
 pr_i +t_i - a_{i+1}=r_{i+1} +s_i -pa_i <0\ 
 \textrm{for} \ 2\leq i \leq n.
\]
Combining these equations with
$s_1 -pa_1 -a_2 = pr_1 +r_2 +t_1$, we get
\begin{multline*}
 -(p^n +1)r_1 = (p^n +1)a_1 +(s_n -t_n)+p(s_{n-1} - t_{n-1})+\\
 \cdots +p^{n-3}(s_3 -t _3)+p^{n-2}(s_2 -t_2)-p^{n-1}(s_1 -t_1),
\end{multline*}
\begin{multline*}
 -(p^n +1)r_2 = (p^n +1)a_2 -(s_1 -t_1)-p(s_n - t_n)-\\
 \cdots -p^{n-3}(s_4 -t _4)-p^{n-2}(s_3 -t_3)-p^{n-1}(s_2 -t_2),
\end{multline*}
\begin{multline*}
 -(p^n +1)r_3 = (p^n +1)a_3 +(s_2 -t_2)-p(s_1 - t_1)-\\
 \cdots -p^{n-3}(s_5 -t _5)-p^{n-2}(s_4 -t_4)-p^{n-1}(s_3 -t_3),
\end{multline*}
\hspace*{6.9em} $\vdots$
\begin{multline*}
 -(p^n +1)r_n = (p^n +1)a_n +(s_{n-1} -t_{n-1})+p(s_{n-2} - t_{n-2})+\\
 \cdots +p^{n-3}(s_2 -t _2)-p^{n-2}(s_1 -t_1)-p^{n-1}(s_n -t_n).
\end{multline*}
As $|s_i -t_i|\leq p+1$ and
\[
 (p+1)+p(p+1)+\cdots +p^{n-1}(p+1)=\biggl(\frac{p^n -1}{p-1}\biggr)(p+1)<2(p^n +1),
\] 
we get $-a_i -1\leq r_i \leq -a_i +1$.
When $e=p-1$, as $|s_i -t_i|\leq p-1$ and
\[
 (p-1)+p(p-1)+\cdots +p^{n-1}(p-1)=\biggl(\frac{p^n -1}{p-1}\biggr)(p-1)<(p^n +1), 
\] 
we get $r_i =-a_i$.

As $r_2 +t_1 +pa_1 \leq p-1$, we have 
\[
 pa_1 \leq t_1 +pa_1 \leq p-1-r_2 \leq p+a_2.
\]
For $2\leq i \leq n$, as $t_i +pa_i - a_{i+1}\leq p-1$, we have 
\[
 pa_i \leq t_i + pa_i \leq p-1 +a_{i+1}.
\]
Take an index $i_0$ such that $a_{i_0}$ is the greatest.
As $pa_{i_0} \leq a_{i_0 +1} +p \leq a_{i_0} +p$, 
we get $a_{i_0} \leq \frac{p}{p-1}<2$.
Combining $-a_i -1\leq r_i$ and $r_i <a_i$, 
we get $a_i \geq 0$.
Hence
\[
 a_i =0,\  r_i =-1,\ \textrm{or}
 \ a_i =1,\  -2\leq r_i \leq 0
\]
for every $i$.

In the case $a_2=0$, we have $r_2 =-1$. 
Comparing $t_1 +pa_1 +a_2 >e$ with 
$r_2 + t_1 +pa_1 \leq p-1$, we get $e <p$.
When $e=p-1$, we have $r_2 =-a_2$. This is a contradiction. 

In the case $a_2 =1$.
As $0\leq t_i +pa_i -a_{i+1} \leq p-1$ for $2 \leq i \leq n$, 
we have $a_i =1$ for all $i$ and $t_i =0$ for $2\leq i \leq n$.
As $r_2 +pa_1 +t_1 \leq p-1$, we have $r_2 \leq -1$.
As $pr_2 +t_2 -a_3 =r_3 +s_2 -pa_2$, we have 
$r_3 =pr_2 +p-1-e\leq -e-1\leq -3$.
This is a contradiction.

Thus we may assume $t_1 +pa_1 +a_2 \leq e$.
We put 
$\mathfrak{M}_{3,\F}=
\Biggl(
\begin{pmatrix}
 u^{-a_i} & 0 \\ 0 & u^{a_i}
\end{pmatrix} 
\Biggr)_i
\cdot 
\mathfrak{M}_{0,\F}$, 
then  
\begin{multline*}
\mathfrak{M}_{3,\F} \sim 
\Biggl(
\alpha _1
\begin{pmatrix}
 0 & u^{s_1 -pa_1 -a_2} \\ u^{t_1 +pa_1 +a_2} & 0
\end{pmatrix}
,
\alpha _2
\begin{pmatrix}
 u^{s_2 -pa_2 +a_3} & 0 \\ 0 & u^{t_2 +pa_2 -a_3}
\end{pmatrix}
,\\
\ldots
,
\alpha _n
\begin{pmatrix}
 u^{s_n -pa_n +a_1} & 0 \\ 0 & u^{t_n +pa_n -a_1}
\end{pmatrix}
\Biggr)
\end{multline*}
and 
$\mathfrak{M}_{1,\F}=
\Biggl(
\begin{pmatrix}
 1 & v_i u^{-a_i} \\ 0 & 1
\end{pmatrix}
\Biggr)_i 
\cdot 
\mathfrak{M}_{3,\F}$.
Note that $\mathfrak{M}_{3,\F}$ satisfies 
the conditions of Lemma \ref{condition}, 
and let $x_3$ be the point of 
$\mathscr{GR}^{\mathbf{v}}_{V_{\F},0}$ 
corresponding to $\mathfrak{M}_{3,\F}$. 
If we put 
$N_i =
\begin{pmatrix}
 0 & v_i u^{-a_i} \\ 0 & 0
\end{pmatrix}$, then 
\[
 \phi (N_1)
\begin{pmatrix}
 0 & u^{s_1 -pa_1 -a_2} \\ u^{t_1 +pa_1 +a_2} & 0
\end{pmatrix}
 N_2 =
\begin{pmatrix}
 0 & \phi (v_1 )v_2 u^{t_1} \\ 0 & 0
\end{pmatrix},
\]
\[
 \phi (N_i)
\begin{pmatrix}
 u^{s_i -pa_i + a_{i+1}} & 0 \\ 0 & u^{t_i +pa_i -a_{i+1}}
\end{pmatrix}
N_{i+1} =0
\]
for $2 \leq i \leq n$. 
Here we have 
$v_u \bigl(\phi (v_1 )v_2 u^{t_1}\bigr)\geq 0$, because
$s_1 -pa_1 -a_2 \geq 0$ and 
$v_u \bigl(
u^{s_1 -pa_1 -a_2} - \phi (v_1 )v_2 u^{t_1}
\bigr)\geq 0$.
Hence 
$x_1$ and $x_3$ lie on the same connected component 
by Lemma \ref{connection}. 

We are going to compare 
$\mathfrak{M}_{0,\F}$ and $\mathfrak{M}_{3,\F}$.
Recall the previous operations on the basis of $\mathfrak{M}_{0,\F}$ 
that changed 
$(t_i )_{1 \leq i \leq n}$ 
so that $|s_i -t_i |\leq p+1$ keeping 
$0 \leq t_i \leq e$ for all $i$. 
Apply the same operations to the basis of 
$\mathfrak{M}_{3,\F}$. 
By Lemma \ref{operation}, these operations do not affect 
which of the connected components $x_3$ lies on. 
So we may assume that 
\[
 s_1 -pa_1 -a_2,\ s_2 - pa_2 +a_3, 
 \ldots ,\ s_n-pa_n +a_1 
\] 
are all in $\bigl[ (e-p-1)/2,(e+p+1)/2 \bigr]$.
As $(e-p-1)/2 \leq s_i \leq (e+p+1)/2$, we have that 
\[
 |pa_1 +a_2 |\leq p+1,\ |pa_2 -a_3 |\leq p+1, 
 \ldots ,\ |pa_n - a_1 |\leq p+1.
\] 
Summing up the above inequalities 
after multiplying some $p$-powers 
so that we can eliminate $a_j$ for $j \neq i$, 
we get 
$|(p^n +1)a_i | \leq \bigl\{(p^n -1)/(p-1)\bigr\}(p+1)$. 
So we have $|a_i | \leq 1$ for all $i$. 

In the case $e\geq p$. 
We consider the operations that decrease 
$|a_i|$ by $1$ for an index $i$ 
keeping the condition of $\phi$-stability. 
By Lemma \ref{operation}, 
these operations do not affect 
which of the connected components $x_3$ lies on. 
We prove that we can continue the operations 
until we have $a_i =0$ for all $i$, 
that is, $x_0$ and $x_3$ lie on the 
same connected component. 
Suppose that we cannot continue the operations 
and there is some nonzero $a_i$.
The condition of $\phi$-stability is equivalent to
\begin{multline*}
 C_1 :0\leq s_1 -pa_1 -a_2 \leq e,\ 
 C_2 :0\leq s_2 -pa_2 +a_3 \leq e,\ \\
 \ldots ,\ 
 C_n :0\leq s_n -pa_n +a_1 \leq e.
\end{multline*}
Note that 
if $a_i \neq 0$ or $a_{i+1} \neq 0$, 
we can decrease 
$|a_i|$ or $|a_{i+1}|$ keeping $C_i$.

We put 
\[
 c_i =\sharp\bigl\{ i\leq j\leq i+1 \bigm| 
 \textrm{we can decrease}\ |a_j|\ \textrm{keeping}\ C_i \bigr\},
\] 
and claim that
$\sharp \{ j\ |\ a_j \neq 0 \} =\sum_{i=1}^n c_i$.
First, if $a_i \neq 0$, we have 
$c_{i-1} \geq 1$ and $c_i \geq 1$ from the above remark. 
So we have $\sharp \{ j\ |\ a_j \neq 0 \} \leq \sum_{i=1}^n c_i$. 
Second, we count $a_i \neq 0$ in not both of 
$C_{i-1}$ and $C_i$, because we cannot continue the operations. 
So we have 
$\sharp \{ j\ |\ a_j \neq 0 \} \geq \sum_{i=1}^n c_i$. 
Hence we have equality. From this equality, 
we have $a_i \neq 0$ and $c_i =1$ for all $i$. 
For $2\leq i \leq n$, we have $a_i =a_{i+1}\neq 0$ because $c_i =1$. 
So we have $a_1 =a_2 \neq 0$, but this contradicts $c_1 =1$. 

In the case $e=p-1$. 
We have 
$|pa_1 +a_2 | \leq p-1$ by $C_1$, 
and 
$|pa_i -a_{i+1} |\leq p-1$ by $C_i$ 
for $2 \leq i \leq n$. 
Summing up these inequalities 
after multiplying some $p$-powers 
so that we can eliminate $a_j$ for $j \neq i$, 
we get 
$|(p^n +1)a_i| \leq p^n -1$. 
So we have $a_i =0$ for all $i$. 

Hence $x_0$ and $x_3$ lie on the same connected component.
This completes the proof.  
\end{proof}

\section{Application}

As an application of Theorem \ref{main}, we can improve 
a theorem in \cite{Kis} 
comparing a deformation ring and a Hecke ring. 
We recall some notation from \cite{Kis}, 
and the interested reader should consult \cite{Kis} 
for more detailed definitions.

Let $F$ be a totally real field, 
and $D$ be a totally definite quaternion algebra with center $F$. 
Let $\Sigma$ be the set of finite primes where $D$ is ramified. 
We assume that $\Sigma$ does not contain any primes dividing $p$. 
We put 
$\Sigma _p =\Sigma \cup \{\mathfrak{p}\}_{\mathfrak{p}\mid p}$, 
and fix a maximal order $\mathcal{O}_D$ of $D$. 
Let 
$U=\prod _v U_v \subset (D\otimes _F \mathbb{A}_F ^f)^{\times}$
be a compact open subgroup contained in 
$\prod _v (\mathcal{O}_D )_v ^{\times}$, 
and we assume that $U_v =(\mathcal{O}_D )_v ^{\times}$ 
for all $v \in \Sigma _p$. 
Let $\mathcal{O}$ be the ring of integers of a $p$-adic field.
We fix a continuous character 
$\psi :(\mathbb{A}_F ^f)^{\times} /F^{\times} \to \mathcal{O}^{\times}$ 
such that 
$\psi$ is trivial on $U_v \cap \mathcal{O}_{F_v} ^{\times}$ 
for any finite place $v$ of $F$.
Let $S$ be a finite set of primes containing 
the infinite primes, $\Sigma _p$, 
and the finite primes $v$ of $F$ 
such that $U_v \subset D_v ^{\times}$ 
is not maximal compact. 
We fix a decomposition group 
$G_{F_v} \subset G_{F,S}$ for each $v \in S$.
Let 
$\mathbb{T}' _{\psi ,\mathcal{O}} (U)$ 
(resp. $\mathbb{T}_{\psi ,\mathcal{O}} (U)$) denote 
the image of $\mathbb{T}_{S,\mathcal{O}} ^{\mathrm{univ}}$ 
(resp. $\mathbb{T}_{S^p ,\mathcal{O}} ^{\mathrm{univ}}$)
in the endomorphism ring of $S_{2,\psi} (U,\mathcal{O})$.
Let $\mathfrak{m}$ be a maximal ideal of 
$\mathbb{T}_{\psi ,\mathcal{O}} (U)$
that induces a non-Eisenstein maximal ideal of 
$\mathbb{T}_{S^p ,\mathcal{O}} ^{\mathrm{univ}}$, 
and put 
$\mathfrak{m}'=\mathfrak{m} \cap \mathbb{T}' _{\psi ,\mathcal{O}} (U)$. 
Then there exists a continuous representation 
$\rho _{\mathfrak{m}'} :G_{F,S} \to 
GL _2 (\mathbb{T}' _{\psi ,\mathcal{O}} (U)_{\mathfrak{m}'})$ 
such that the characteristic polynomial of 
$\rho _{\mathfrak{m}'} (\mathrm{Frob}_v)$ is 
$X^2 -T_v X + \mathbf{N} (v) S_v$ for $v \notin S$. 
Here $\mathbf{N} (v)$ denotes the order of 
the residue field at $v$.
Let $\F$ be the residue field of 
$\mathbb{T}' _{\psi ,\mathcal{O}} (U)_{\mathfrak{m}'}$.
Let 
$\bar{\rho} _{\mathfrak{m}'} :G_{F,S} \to GL _2 (\F)$ 
denote the representation obtained by reducing 
$\rho _{\mathfrak{m}'}$ modulo $\mathfrak{m}'$. 

Now we suppose that $\bar{\rho} _{\mathfrak{m}'}$ 
satisfies the following conditions.
\begin{enumerate}
 \item 
 $\bar{\rho} _{\mathfrak{m}'}$ is unramified 
 outside the primes of $F$ dividing $p$. 
 \item
 The restriction of $\bar{\rho} _{\mathfrak{m}'}$ 
 to $G_{F(\zeta _p)}$ is absolutely irreducible.
 \item
 If $p=5$, and $\bar{\rho} _{\mathfrak{m}'}$ has 
 projective image isomorphic to 
 $PGL_2 (\F _5)$, 
 then the kernel of 
 $\proj \bar{\rho} _{\mathfrak{m}'}$ 
 does not fix $F(\zeta _5)$. 
 \item
 For each finite prime $v\in S\setminus \Sigma _p$, we have 
 \[
 \bigl(1-\mathbf{N}(v)\bigr)
 \Bigl(
 \bigl(1+\mathbf{N}(v)\bigr)^2 
 \det \bar{\rho} _{\mathfrak{m}'}(\mathrm{Frob}_v )-
 \bigl(\mathbf{N}(v)\bigr)
 \bigl(\tr \bar{\rho}_{\mathfrak{m}'}(\mathrm{Frob}_v)\bigr)^2 
 \Bigr) \in \F^{\times}.
 \]
\end{enumerate}
Let $R_{F,S}$ (resp. $R_{F,S} ^{\square}$) be 
the universal deformation $\mathcal{O}$-algebra 
(resp. the universal framed deformation $\mathcal{O}$-algebra) 
of $\bar{\rho} _{\mathfrak{m}'}$, 
and put 
$\mathbb{T}^{\square} =R_{F,S} ^{\square} \otimes _{R_{F,S}} 
\mathbb{T}_{\psi ,\mathcal{O}} (U)_{\mathfrak{m}}$. 
We take a subset $\sigma '$ of the set of 
primes of $F$ dividing $p$, 
and an unramified character $\chi _{\mathfrak{p}}$ 
of $G_{F_{\mathfrak{p}}}$ for each $\mathfrak{p} \in \sigma '$, 
such that $\mathfrak{m}$ is $\sigma$-ordinary 
when we put 
$\sigma =(\sigma ',\{\chi _{\mathfrak{p}} \}_{\mathfrak{p}\in \sigma '} )$.
Now we can define a deformation ring 
$\tilde{R} _{F,S} ^{\sigma ,\psi}$ and a map
$\tilde{R} _{F,S} ^{\sigma ,\psi} \to \mathbb{T}^{\square}$ 
as in (3.4) of \cite{Kis}.

\begin{thm}
With the above notation and the assumptions, 
$\tilde{R} _{F,S} ^{\sigma ,\psi} \to \mathbb{T}^{\square}$ 
is an isomorphism up to $p$-power torsion kernel.  
\end{thm}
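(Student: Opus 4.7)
The plan is to follow Kisin's patching argument from \cite{Kis} verbatim, substituting Theorem \ref{main} in place of its special case (which was established there only when $F_{\mathfrak{p}}$ is totally ramified over $\mathbb{Q}_p$) in order to handle primes above $p$ with arbitrary residue field uniformly.

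First, one constructs the map $\tilde{R}_{F,S}^{\sigma,\psi}\to\mathbb{T}^{\square}$ by the universal property: the representation $\rho_{\mathfrak{m}'}$ base changes to a deformation of $\bar{\rho}_{\mathfrak{m}'}$ over $\mathbb{T}_{\psi,\mathcal{O}}(U)_{\mathfrak{m}}$ which is $\sigma$-ordinary at primes of $\sigma'$ and flat at the remaining primes above $p$; surjectivity is then standard via Chebotarev applied to traces of Frobenius. The content of the theorem is therefore that the kernel is annihilated by a power of $p$, or equivalently that $\tilde{R}_{F,S}^{\sigma,\psi}[1/p]\to\mathbb{T}^{\square}[1/p]$ is an isomorphism.

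For this, one invokes the Taylor--Wiles--Kisin patching machinery: assumptions (1)--(4) on $\bar{\rho}_{\mathfrak{m}'}$ allow Taylor--Wiles primes of arbitrarily large level to be chosen, producing in the limit a patched module $M_\infty$ over a power-series ring $R_\infty$ over the completed tensor product of the local framed deformation rings at primes in $S$. The task is to show $M_\infty[1/p]$ has full support on $\Spec R_\infty[1/p]$. At the auxiliary Taylor--Wiles primes and at primes in $S\setminus\{\mathfrak{p}\mid p\}$ this is standard. At each $\mathfrak{p}\in\sigma'$, the $\sigma$-ordinary condition pins down the specific ordinary component described in \cite{Kis}, on whose generic fiber modular points are dense. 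At each $\mathfrak{p}\mid p$ with $\mathfrak{p}\notin\sigma'$, the local framed flat deformation ring is realized geometrically through the moduli space $\mathscr{GR}^{\mathbf{v}}_{V_{\F},0}$ of finite flat models of $\bar{\rho}_{\mathfrak{m}'}|_{G_{F_{\mathfrak{p}}}}$; by Theorem \ref{main} its non-ordinary locus is a single connected component, so the existence of a single modular non-ordinary point — guaranteed by the maximality of $\sigma'$ — forces the support of $M_\infty[1/p]$ to fill out that whole non-ordinary component.

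Assembling these local statements and descending from $R_\infty$ to $\tilde{R}_{F,S}^{\sigma,\psi}$ in the standard way yields $\tilde{R}_{F,S}^{\sigma,\psi}[1/p]\xrightarrow{\sim}\mathbb{T}^{\square}[1/p]$, whence the original map is an isomorphism up to $p$-power torsion kernel. The main obstacle is precisely the connectedness of the non-ordinary component at primes $\mathfrak{p}\notin\sigma'$ of general residue degree over $\mathbb{F}_p$: without Theorem \ref{main} the patched module could a priori miss some non-modular connected component of the local moduli, preventing $M_\infty[1/p]$ from being faithful over $R_\infty[1/p]$. Theorem \ref{main} removes this obstruction uniformly, so the remainder of Kisin's argument applies unchanged.
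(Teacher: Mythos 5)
Your proposal takes essentially the same route as the paper: the paper's proof is precisely the one-line observation that, with Theorem \ref{main} available, the proof of \cite[Theorem 3.4.11]{Kis} goes through unchanged, and your sketch is an expansion of exactly that Taylor--Wiles--Kisin patching argument with the connectedness of the non-ordinary locus as the new local input at primes $\mathfrak{p}\mid p$ with $\mathfrak{p}\notin\sigma'$. One small caution: the paper imposes no maximality condition on $\sigma'$, so the step forcing full support should be justified as in Kisin's argument itself (formal smoothness of the generic fibers of the local flat deformation rings makes each connected component irreducible, so the patched module's support, being a nonempty union of irreducible components of the chosen component, is everything) rather than by appealing to a ``maximality of $\sigma'$'' producing a modular non-ordinary point.
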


\begin{proof}
 Applying the Theorem \ref{main}, 
the proof goes on 
as in the proof of \cite[Theorem 3.4.11]{Kis}. 
\end{proof}

\end{document}